\patchcmd{\thebibliography}{\leftmargin\labelwidth}{\leftmargin\labelwidth\addtolength\itemsep{-0.1\baselineskip}}{}{}
\newtheorem{theorem}{Theorem}
\newtheorem{cor}[theorem]{Corollary}
\newtheorem{prop}[theorem]{Proposition}
\newtheorem{ques}{Question}
\newtheorem{conj}{Conjecture}
\newtheorem{exam}[theorem]{Example}
\newcommand*{\eqdef}{\stackrel{\mbox{\normalfont\tiny def}}{=}}   
\newcommand*{\abs}[1]{\lvert #1\rvert}                
\newcommand*{\R}{\mathbb R}
\DeclareMathOperator{\diam}{diam}
\DeclareMathOperator{\VR}{\mathcal{VR}}
\DeclareMathOperator{\Cl}{Cl}
\newcommand*{\rfacets}{F'}
\newcommand*{\dfacets}{f'}
\newcommand*{\level}[2]{\mathcal{L}^{#1}_{#2}}
\title{Facets in the Vietoris--Rips complexes of hypercubes}
\date{\today}
\author[J. Briggs]{Joseph Briggs}\address{Department of Mathematics and Statistics\\Auburn University\\Auburn\\AL~36849\\USA}
\email{jgb0059@auburn.edu}
\author[Z. Feng]{Ziqin Feng}
\address{Department of Mathematics and Statistics\\Auburn University\\Auburn\\AL~36849\\USA}
\email{zzf0006@auburn.edu}
\author[C. Wells]{Chris Wells}
\address{Department of Mathematics and Statistics\\Auburn University\\Auburn\\AL~36849\\USA}
\email{coc0014@auburn.edu}
\subjclass[2020]{05C69, 05E45, 55N31, 55U10}
\keywords{Hypercube Graph, Vietoris--Rips Complex, Homology, Hadamard Matrix, Kneser Graph}
\begin{document}
\maketitle
\begin{abstract}
    In this paper, we investigate the facets of the Vietoris--Rips complex $\VR(Q_n; r)$ where $Q_n$ denotes the $n$-dimensional hypercube.
    We are particularly interested in those facets which are somehow independent of the dimension $n$.
    Using Hadamard matrices, we prove that the number of different dimensions of such facets is a super-polynomial function of the scale $r$, assuming that $n$ is sufficiently large.
    We show also that the $(2r-1)$-th dimensional homology of the complex $\VR(Q_n; r)$ is non-trivial when $n$ is large enough, provided that the Hadamard matrix of order $2r$ exists.
\end{abstract}

\section{Introduction}
 Let $Q_n$ be the set $\{\pm 1\}^n$ equipped with Hamming distance $d$, i.e.\ for any $x, y\in Q_n$, $d(x, y)$ is the number of entries in which $x$ and $y$ differ.
The Vietoris--Rips complex $\mathcal{VR}(X;r)$ of a metric space $X$ with scale $r\geq 0$ is a simplicial complex with vertex set $X$, where a subset $\sigma\subseteq X$ is a simplex in $\mathcal{VR}(X;r)$ if and only if $\rho(x, y)\leq r$ for any pair $x, y\in \sigma$, where $\rho$ is the metric on $X$.
The hypercube graph is the graph with vertex set $Q_n$ such that any pair of vertices are adjacent if and only if their distance is $1$.
Therefore, the hypercube graph is same as  the Vietoris-Rips complex $\VR(Q_n; 1)$.
Recently, a lot of attention has been drawn to study the topological properties of Vietoris--Rips complexes of different metric spaces, for example, circles and ellipses (\cite{AA17, AAR19}), metric graphs (\cite{GGPSWWZ18}), geodesic spaces (\cite{ZV19, ZV20}), planar point sets (\cite{ACJS18}, \cite{CDEG10}), hypercube graphs (\cite{AA22, F23, FN24,  Shu22, AV24}), and more (\cite{MA13}, \cite{AAGGPS20}, \cite{AFK17}).

The study of the Vietoris--Rips complexes of hypercube graphs was initiated by Adamaszek and Adams in \cite{AA22}. Their main result in \cite{AA22} is that $\VR(Q_n;2)$ is homotopy equivalent to a wedge sum of $c_n$-many $3$-spheres where $c_n =\sum_{0\leq j<i<n}(j+1)(2^{n-2}-2^{i-1})$.
Motivated by the computed results in \cite{AA22} and using the results in \cite{FN24}, Feng~\cite{F23} studied these complexes with scale $3$ and proved that the complex $\VR(Q_n;3)$ is a wedge sum of $a_n$-many $4$-spheres and $b_n$-many $7$-spheres for $n\geq 5$ where $a_n= \sum_{i=4}^{n-1}2^{i-1}{i\choose 4}$ and $b_n=2^{n-4}{n\choose 4}$.
It is worth mentioning that Adams and Virk in \cite{AV24} discussed lower bounds of the rank of some non-trivial homologies with general scale $r$.
Shukla in \cite{Shu22} conjectured  that for $n\geq r+2$, $\tilde{H}_p(\VR(Q_n; r); \mathbb{Z})\neq 0$ if and only if $p=r+1$ or $2^r-1$.
This conjecture was invalidated by the computational findings, which shows that
\begin{align*}
    \tilde{H}_p(\VR(Q_6;4)) \cong
\begin{cases}
0 & \text{if } 0\leq p \leq 6 \\
(\mathbb{Z}/2)^{239} & \text{if } p=7  \\
 0  & \text{if } 8  \leq p \leq 14 \\
(\mathbb{Z}/2)^{14} & \text{if } p = 15\\
0& \text{if } p\geq 16.
\end{cases}
\end{align*}
We use the Easley Cluster at Auburn University (a system for high-performance and parallel computing), with up to $190$GB of memory, and the Risper software package \cite{Ba21}.
At a general scale $r$, the dimensions of non-trivial homologies in $\VR(Q_n; r)$ remain a mystery. The known results for $r=2$ and computed results for $r=4$ lead to the conjecture that for some scale $r$, the dimensions of non-trivial homologies of the complex $\VR(Q_n; r)$ should be in $\{2r-1, 2^r-1\}$. Another class of simplicial complexes is the Vietoris-Rips complex of the level set $\mathcal{L}_\ell^n$ which is closely related to the independence complex of Kneser graph.  The level set  $\mathcal{L}_\ell^{n}$ is the subset of $Q_n$ with all vertices containing exactly $\ell$ ones.  The computed results in Table~\ref{non_trivial} also using the Easley Cluster at Auburn University suggested the one of non-trivial homology dimensions in the complex $\VR(\mathcal{L}_\ell^n;r)$ should be $2r-2$ for some scale $r$.

The list of facets (maximal simplices) is necessary for the proof of almost all the known results in exploring the topological properties of the complex $\VR(Q_n; r)$.
As in \cite{AV24}, the $(2^r-1)$th homology generator in $VR(Q_n;r)$ is a natural extension of a facet with size $2^r$.
In this paper, we investigate different sizes of the facets in $\VR(Q_n;r)$ whose sizes do not depend on $n$.
With the help of Hadamard matrices, we construct the facets in $\VR(Q_n; r)$ whose size only depend on the scale $r$ and not on the dimension $n$.
We establish that the number of different dimensions of such facets is a super-polynomial function of the scale $r$, assuming that $n$ is sufficiently large.
Furthermore, we prove that the $(2r-1)$th homology of the complex $\VR(Q_n; r)$ and the $(2r-2)$th homology of $\VR(\mathcal{L}_\ell^n;r)$ is non-trivial for $\ell\geq r$ and $n$ being sufficiently large if  a Hadamard matrix of order $2r$ exists.
 \begin{table}\label{non_trivial}
 \caption{Ranks of non-trivial homologies in Vietoris--Rips complex $\VR(\mathcal{L}_3^n;4 )$ computed through Auburn University Easley Cluster.}
\label{KG_3_homology}
\begin{tabular}{ |c|p{0.8cm}|p{0.8cm}|p{0.8cm}|p{0.8cm}| }
 \hline
 \backslashbox[2mm]{homology}{$n$} & $6$ & $7$ & $8$& $9$   \\
  \hline
 $6$th-dim & $0$ &  29  & 233 & 1,052 \\
 \hline
 $9$th-dim & $1$ & $7$ &  $28$  & $84$  \\
 \hline
  other dim & $0$ & $0$ &  $0$  & $0$  \\
 \hline
 \end{tabular}
\end{table}

The paper is organized as follows. We start with some preliminaries in Section~\ref{pre}. In Section~\ref{sec:hada_simplex}, a class of facets in the complex $\VR(Q_n; r)$ is identified through Hadamard matrices. Section~\ref{sec:nontrivial_homo} is devoted to exploring the dimensions of non-trivial homology in the complexes $\VR(Q_n;r)$ and $\VR(\level n\ell; r)$ through the existence of Hadamard matrix of order $2r$.  The complex $\VR(\level n\ell; r)$ is closely related to the independence complex of the Kneser graph.
Then in Section~\ref{sec:facets}, we show that number of different dimensions in which $\VR(Q_n;r)$ contains a facet is a super-polynomial function of the scale $r$.
We conclude  with a list of open questions and conjectures in Section~\ref{sec:KG_questions}.

\section{Notation and Preliminaries}\label{pre}
\textbf{Simplicial complexes.}  A simplicial complex $K$ on a vertex set $V$ is a collection of non-empty subsets of $V$ such that: i) all singletons are in $K$; and ii) if $\sigma\in K$ and $\tau\subset \sigma$, then $\tau\in K$. For a complex $K$, we use $K^{(k)}$ to represent the $k$-skeleton of $K$, which is a subcomplex of $K$. For vertices $v_1, v_2, \ldots, v_k$ in a complex $K$,  if they span a simplex in $K$, then we denote the simplex to be $\{v_1, v_2, \ldots, v_k\}$. If $\sigma$ and $\tau$ are simplices in $K$ with $\sigma\subset \tau$, we say that $\sigma$ is a face of $\tau$.

A simplex which is maximal in $K$ is called a \emph{facet} of $K$.
\medskip

\textbf{Clique complex.} A simplicial complex $K$ is \emph{a clique complex} if  the following condition holds: a non-empty subset $\sigma$ of vertices is in $K$ given that the edge $\{v, w\}$ is in  $K$ for any pair $v, w\in \sigma$. For any graph $G=(V, E)$, we denote Cl$(G)$ to be the clique complex of $G$ whose vertex set is $V$ and a finite subset $\sigma\subset V$ is  a simplex in Cl$(G)$ if each pair of vertices in $\sigma$ forms an edge in $G$.
\medskip

\textbf{Diameter and rigidity.} Let $(X, d)$ be a finite metric space and $A$ be a subset of $X$. The \emph{diameter} of $A$ is $\diam(A)\eqdef\max_{x,y\in A}d(x,y)$.
We say that $A$ is \emph{maximally diameter-$r$} if $\diam(A)=r$ and $\diam(A\cup\{x\})>r$ for all $x\in X\setminus A$.
Finally, we say that $A$ is \emph{rigid} if $\max_{x\in A}d(x,a)=\diam(A)$ for every $a\in A$.

Note that the Vietoris--Rips complex over any metric space is a clique complex of its $1$-skeleton by the definition.
Any maximally diameter-$r$ subset $A$ of $X$ spans a facet in the Vietoris--Rips complex $\VR (X; r)$.
\medskip

 \textbf{Hadamard Matrix.} A \emph{Hadamard matrix} of order $n$ is a matrix $H\in\{\pm 1\}^{n\times n}$ satisfying
\[
    H^TH=HH^T=nI,
\]
where $I$ is the identity matrix. The famous Hadamard matrix conjecture asserts that there exists a Hadamard matrix of order $n$ whenever $n$ is a multiple of $4$.
Reversing the sign of a full row/column of a Hadamard matrix yields another Hadamard matrix; a Hadamard matrix is said to be \emph{normalized} if its first row and first column are all $1$'s.
\medskip

\textbf{Cross-polytopal complexes.} Let $G=(V, E)$ be the graph with vertex set $V=\{v_1, \dots, v_\ell, v_{-1},\dots,v_{-\ell}\}$ and $\{v_i, v_j\}\in E$ if and only if $i\neq -j$.
Then the clique complex $\Cl(G)$ is the boundary of a cross-polytope with $2\ell$ vertices which is homotopy to $S^{\ell-1}$. Furthermore, this complex has an $(\ell-1)$-dimensional cycle which generates $H_{\ell-1}(\Cl(G))$. We say this kind of complex to be \emph{cross-polytopal}.

The following result is straightforward to prove (see \cite{ZW24} for a proof).

\begin{prop}\label{cross_p_injective}
    Let $L$ be a cross-polytopal with $2\ell$ vertices.
    If $L$ is a subcomplex of $K$ and there is a simplex in $L$ which is maximal in $K$, then the inclusion map $\imath: L\rightarrow K$ induces an injective map in the $(\ell-1)$-dimensional homology and hence the $(\ell-1)$-dimensional homology of $K$ is non-trivial.
\end{prop}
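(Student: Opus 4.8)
The plan is to exhibit an explicit cochain (or, dually, to use a retraction argument) that detects the fundamental cycle of the cross-polytopal subcomplex $L$ inside $K$. Write the $2\ell$ vertices of $L$ as $v_1,\dots,v_\ell,v_{-1},\dots,v_{-\ell}$ with the convention that $\{v_i,v_j\}$ is an edge of $L$ exactly when $i\neq -j$, so that $L=\Cl(G)$ is the boundary of the $\ell$-dimensional cross-polytope and $H_{\ell-1}(L)\cong\Z$ is generated by the class $[z]$ of the cycle $z=\sum_{\varepsilon\in\{\pm1\}^\ell}\pm\,[v_{\varepsilon_1 1},\dots,v_{\varepsilon_\ell \ell}]$, the signs being the usual ones making $\partial z=0$. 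By hypothesis there is some index, say the simplex $\tau=\{v_{\eta_1 1},\dots,v_{\eta_\ell \ell}\}$ for a fixed sign vector $\eta$, which is a facet of $K$: it is maximal, so no vertex of $K$ is joined to all of $\tau$, and in particular $\tau$ appears in no $\ell$-dimensional simplex of $K$.

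First I would show $z$ is not a boundary in $K$. Consider the simplicial cochain $c\in C^{\ell-1}(K;\Z)$ that is the indicator of the single $(\ell-1)$-simplex $\tau$ (with a chosen orientation). Then $\langle c,z\rangle=\pm1$. I claim $c$ is a cocycle in $K$: $\delta c$ evaluated on an $\ell$-simplex $\sigma$ of $K$ counts (with sign) the codimension-one faces of $\sigma$ equal to $\tau$; but $\tau$ is a facet of $K$, so no such $\sigma$ exists, hence $\delta c=0$. Therefore $c$ represents a class in $H^{\ell-1}(K;\Z)$, and $\langle [c],\imath_*[z]\rangle=\pm1\neq0$, which forces $\imath_*[z]\neq 0$ in $H_{\ell-1}(K;\Z)$. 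Since $[z]$ generates $H_{\ell-1}(L)\cong\Z$, the induced map $\imath_*$ is injective, and in particular $H_{\ell-1}(K)$ is non-trivial. (The same argument runs verbatim with coefficients in any nonzero ring, replacing $\pm1$ by a unit.)

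The step requiring the most care is the verification that $c$ is genuinely a cocycle, i.e. that the maximality of $\tau$ really does kill $\delta c$ on all of $K$, not merely within $L$ — this is exactly where the hypothesis "some simplex of $L$ is maximal in $K$" is used, and it is what prevents a large simplex of $K$ containing $\tau$ from providing a bounding chain for $z$. One should also double-check the orientation bookkeeping so that $z$ is honestly a cycle in $L$ and that $\langle c,z\rangle$ picks up the coefficient of $\tau$ in $z$, which is $\pm1$ because each facet of the cross-polytope appears exactly once in the fundamental cycle. A cleaner, essentially equivalent route, if one prefers to avoid cochains: collapse $K$ onto $L$ by noting that any $\ell$-simplex of $K$ misses $\tau$ as a face, define a chain map $K\to L$ that is the identity on $L$ and sends higher simplices of $K$ appropriately, and observe it provides a left inverse to $\imath_*$ on $H_{\ell-1}$; but the cochain argument above is the most economical and is what I would write up.
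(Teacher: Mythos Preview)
Your argument is correct. The paper itself does not actually supply a proof of this proposition; it merely states that the result is ``straightforward to prove (see \cite{ZW24} for a proof)'' and moves on, so there is no in-paper argument to compare against.

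Your cochain-pairing approach is the standard one and is complete as written: the indicator cochain $c$ of the facet $\tau$ is a cocycle precisely because $\tau$ is maximal in $K$, and the Kronecker pairing $\langle [c],\imath_*[z]\rangle=\pm 1$ then certifies that $\imath_*[z]\neq 0$. You implicitly use (and should state) that a simplex of $L$ maximal in $K$ must also be maximal in $L$, hence must be one of the $(\ell-1)$-dimensional faces $\{v_{\eta_1 1},\dots,v_{\eta_\ell \ell}\}$; this is immediate since $L\subseteq K$, but it justifies the form you assume for $\tau$. The aside about a chain-level retraction is unnecessary and, as you note, would require more work to make precise; the cochain argument already suffices.
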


\section{Hadamard matrices and Hadamard simplices in \texorpdfstring{$Q_n$}{the hypercube}}\label{sec:hada_simplex}

Recall that a simplex $\sigma\in\VR(Q_n;r)$ is said to be rigid if $\max_{y\in\sigma}d(x,y)=r$ for every $x\in\sigma$.
Write $\rfacets(n;r)$ to denote the set of all \emph{rigid} facets of $\VR(Q_n;r)$.
\medskip

A subset $A$ of $Q_n$ is said to be a \emph{Hadamard simplex} if $\abs A=n+1$ and $\langle x, y\rangle=-1$ for any $x, y\in A$ with $x\neq y$.
Here, $\langle\cdot,\cdot\rangle$ is the standard Euclidean inner product.
In other words, a Hadamard simplex is a regular simplex in $\R^n$ all of whose vertices live in $Q_n$.

The reason behind naming such a set after Hadamard is due to the following straightforward equivalence between Hadamard simplices and Hadamard matrices:
\begin{prop}\label{simplex_iff_basis}
    A set $A\subseteq Q_n$ is a Hadamard simplex if and only if $\{1\}\times A\subseteq Q_{n+1}$ is the set of columns of a Hadamard matrix of order $n+1$.
\end{prop}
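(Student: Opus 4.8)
The plan is to make the correspondence between $A$ and the matrix explicit and then reduce both implications to the single identity defining a Hadamard matrix. Enumerate $A=\{a_1,\dots,a_{n+1}\}\subseteq Q_n$ and let $H\in\{\pm1\}^{(n+1)\times(n+1)}$ be the matrix whose $i$-th column is the vector $(1,a_i)\in\{1\}\times A\subseteq Q_{n+1}$. The one computation needed is that the $(i,j)$ entry of $H^TH$ is $\langle (1,a_i),(1,a_j)\rangle=1+\langle a_i,a_j\rangle$. In particular every diagonal entry equals $1+\langle a_i,a_i\rangle=1+n=n+1$ for free, so $H^TH=(n+1)I$ holds if and only if $\langle a_i,a_j\rangle=-1$ for all $i\neq j$, which --- once we know $\abs A=n+1$ --- is precisely the defining condition of a Hadamard simplex.

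It then remains to reconcile this single identity with the full definition $H^TH=HH^T=(n+1)I$ and to take care of the cardinality bookkeeping. For the former, since $H$ is square, $H^TH=(n+1)I$ already exhibits $\tfrac1{n+1}H^T$ as a two-sided inverse of $H$, so $HH^T=(n+1)I$ follows automatically; hence checking $H^TH=(n+1)I$ suffices. For the latter, if $A$ is a Hadamard simplex then $\langle a_i,a_j\rangle=-1\neq n$ for $i\neq j$ forces the $a_i$, and hence the columns $(1,a_i)$, to be pairwise distinct, so $H$ genuinely has the $(n+1)$-element set $\{1\}\times A$ as its column set; this gives the forward direction. Conversely, if $\{1\}\times A$ is the column set of some order-$(n+1)$ Hadamard matrix $M$, then the columns of $M$ are pairwise orthogonal, hence pairwise distinct, so $\abs{\{1\}\times A}=n+1$ and thus $\abs A=n+1$; and orthogonality of two distinct columns $(1,x),(1,y)$ with $x,y\in A$ reads $1+\langle x,y\rangle=0$, i.e.\ $\langle x,y\rangle=-1$, so $A$ is a Hadamard simplex.

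I do not expect a genuine obstacle: the statement is essentially a change of viewpoint, and the proof is the direct verification just outlined. The only points that deserve a line of care are the two noted above --- that one need not separately check $HH^T=(n+1)I$, and that pairwise distinctness of the relevant sign vectors is automatic rather than an extra hypothesis --- together with the harmless remark that permuting columns preserves the Hadamard property, so that ``the set of columns'' is the correct formulation.
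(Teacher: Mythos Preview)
Your argument is correct and complete. The paper does not actually supply a proof of this proposition; it is stated as a ``straightforward equivalence'' and left without proof, so there is nothing to compare against---your write-up simply fills in the details the paper omits, including the minor bookkeeping (distinctness of the columns, and that $H^TH=(n+1)I$ automatically gives $HH^T=(n+1)I$ for square $H$) that makes the equivalence honest.
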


In particular, a Hadamard simplex in $Q_n$ exists if and only if a Hadamard matrix of order $n+1$ exists.

Our first goal is to show that Hadamard simplices are rigid facets in the Vietoris--Rips complex of the appropriate scale.
To aid in this venture, we quickly record two observations about Hadamard simplices.
\begin{prop}\label{hada_obs}
    If $A\subseteq Q_n$ is a Hadamard simplex, then
    \[
        \sum_{x\in A}x=\mathbf 0,\qquad\text{and}\qquad \sum_{x\in A}xx^T=(n+1)I,
    \]
    where $\mathbf 0$ is the zero vector and $I$ is the identity matrix.
\end{prop}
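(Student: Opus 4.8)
The plan is to read off both identities from the block structure of the defining equation of a Hadamard matrix, using Proposition~\ref{simplex_iff_basis}. Write $A=\{x_1,\dots,x_{n+1}\}$ and let $H$ be the $(n+1)\times(n+1)$ matrix whose $i$-th column is $h_i=(1,x_i^T)^T$; by Proposition~\ref{simplex_iff_basis} this $H$ is a Hadamard matrix, so $HH^T=(n+1)I_{n+1}$. Expanding $HH^T=\sum_i h_ih_i^T$ and writing each rank-one summand in block form
\[
    h_ih_i^T=\begin{pmatrix} 1 & x_i^T \\ x_i & x_ix_i^T\end{pmatrix},
\]
the equation $\sum_i h_ih_i^T=(n+1)I_{n+1}$ splits into the scalar identity $n+1=n+1$ (top-left entry), the identity $\sum_i x_i=\mathbf 0$ (the off-diagonal blocks), and $\sum_i x_ix_i^T=(n+1)I_n$ (bottom-right block), which are exactly the two claimed equalities.

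Alternatively, and avoiding even the citation to Proposition~\ref{simplex_iff_basis}, one can argue directly. For the first identity compute
\[
    \Big\|\sum_{x\in A}x\Big\|^2=\sum_{x,y\in A}\langle x,y\rangle=\sum_{x\in A}\langle x,x\rangle+\sum_{\substack{x,y\in A\\ x\neq y}}\langle x,y\rangle=(n+1)n+(n+1)n\cdot(-1)=0,
\]
using $\langle x,x\rangle=n$ for $x\in Q_n$ and $\langle x,y\rangle=-1$ for distinct $x,y\in A$; hence $\sum_{x\in A}x=\mathbf 0$. For the second, let $B\in\{\pm1\}^{n\times(n+1)}$ be the matrix whose columns are the elements of $A$, so that $\sum_{x\in A}xx^T=BB^T$. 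The Gram matrix $B^TB$ has diagonal entries $n$ and off-diagonal entries $-1$, i.e.\ $B^TB=(n+1)I_{n+1}-J_{n+1}$ where $J_{n+1}$ is the all-ones matrix; its eigenvalues are $0$ (eigenvector the all-ones vector) and $n+1$ with multiplicity $n$. Since $BB^T$ is an $n\times n$ symmetric matrix sharing the nonzero spectrum of $B^TB$, all $n$ of its eigenvalues equal $n+1$, so $BB^T=(n+1)I_n$.

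There is essentially no obstacle here: both statements are short linear-algebra computations. The only points requiring a little care are the bookkeeping in the block decomposition (checking that the off-diagonal blocks really are $\sum_i x_i$ and its transpose) in the first approach, and, in the second, invoking the standard fact that $B^TB$ and $BB^T$ have the same nonzero eigenvalues together with the observation that a symmetric matrix all of whose eigenvalues equal $\lambda$ must be $\lambda I$. I would present the block-matrix version as the main proof, since Proposition~\ref{simplex_iff_basis} is already available and it yields both identities simultaneously.
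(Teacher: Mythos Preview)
Your proposal is correct. Your primary (block-matrix) argument is a route the paper explicitly acknowledges but deliberately sets aside: the authors note that both observations ``can be derived from Proposition~\ref{simplex_iff_basis}'' and then choose instead to argue directly from the definition. Your alternative argument is essentially the paper's actual proof, with one small difference: for $\sum_{x\in A}x=\mathbf 0$ you compute $\bigl\lVert\sum_x x\bigr\rVert^2$ by expanding inner products, whereas the paper observes that $\mathbf 1\in\ker M^TM=\ker M$ and hence $M\mathbf 1=\mathbf 0$. Your block-matrix approach has the advantage of delivering both identities in one stroke with no spectral reasoning at all; the paper's approach avoids invoking Proposition~\ref{simplex_iff_basis} and keeps the argument self-contained from the definition of a Hadamard simplex.
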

\begin{proof}
    Both observations can be derived from Proposition~\ref{simplex_iff_basis}, but we will instead derive them directly from the definition and basic linear algebra facts.

    Let $M\in\{\pm 1\}^{n\times(n+1)}$ be any matrix whose set of columns is $A$.
    By definition, $M^TM=(n+1)I-J$ where $J$ is the all-ones matrix.
    In particular, $\mathbf 1\in\ker M^TM$ where $\mathbf 1$ is the all-ones vector and the eigenvalues of $M^TM$ are $0$ with multiplicity $1$ and $n+1$ with multiplicity $n$.

    Since $\ker M=\ker M^TM$,
    \[
        \mathbf 0=M\mathbf 1=\sum_{x\in A}x,
    \]
    which establishes the first part of the claim.
    Finally, since $M^TM$ and $MM^T$ have the same multiset of non-zero eigenvalues, $MM^T$ must have all of its $n$ eigenvalues equal to $n+1$.
    Since $MM^T$ is clearly diagonalizable, this forces
    \[
        (n+1)I=MM^T=\sum_{x\in A}xx^T.\qedhere
    \]
\end{proof}

As stated, we will prove that Hadamard simplices are rigid facets of the Vietoris--Rips complex of the appropriate scale.
To do so, it will be much more convenient to work with the inner product than with the Hamming distance; luckily, the two are equivalent in $Q_n$.
In particular, for $x,y\in Q_n$,
\begin{align}
    d(x,y) &={1\over 4}\lVert x-y\rVert_2^2={1\over 4}\bigl(\lVert x\rVert_2^2+\lVert y\rVert_2^2-2\langle x,y\rangle\bigr)={1\over 2}\bigl(n-\langle x,y\rangle\bigr)\nonumber \\
    \implies\quad \langle x,y\rangle &=n-2d(x,y).\label{hamming_innerprod}
\end{align}

\begin{theorem}\label{hada_simplex}
    If $A\subseteq Q_n$ is a Hadamard simplex, then $A\in\rfacets(n,{n+1\over 2})$.
\end{theorem}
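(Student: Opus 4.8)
The plan is to translate everything into inner products via the identity $\langle x,y\rangle = n - 2d(x,y)$ from \eqref{hamming_innerprod}, so that the claim ``$A$ is a rigid facet at scale $r=\tfrac{n+1}{2}$'' becomes a statement purely about the geometry of the regular simplex $A$. Since any two distinct $x,y\in A$ satisfy $\langle x,y\rangle=-1$, we get $d(x,y)=\tfrac{n+1}{2}=r$; in particular $\diam(A)=r$ and every point of $A$ is at distance exactly $r$ from every other point of $A$, so $A$ is automatically rigid \emph{provided it is a simplex in $\VR(Q_n;r)$}, which it is. So the only real content is that $A$ is \emph{maximal}, i.e. that no $z\in Q_n\setminus A$ can be added while keeping all pairwise distances $\le r$.

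For maximality, suppose $z\in Q_n$ satisfies $d(z,x)\le r=\tfrac{n+1}{2}$ for all $x\in A$; equivalently $\langle z,x\rangle \ge n-2r = -1$ for all $x\in A$. I want to conclude $z\in A$. The key tool is the second identity of Proposition~\ref{hada_obs}, namely $\sum_{x\in A}xx^T=(n+1)I$. Apply this quadratic form to $z$: since $\langle z,z\rangle = n$, we get
\[
    \sum_{x\in A}\langle x,z\rangle^2 \;=\; z^T\Bigl(\sum_{x\in A}xx^T\Bigr)z \;=\; (n+1)\,\lVert z\rVert_2^2 \;=\; n(n+1).
\]
Meanwhile, using the first identity $\sum_{x\in A}x=\mathbf 0$, we have $\sum_{x\in A}\langle x,z\rangle = \langle \mathbf 0, z\rangle = 0$. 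Now each integer $\langle x,z\rangle$ lies in $\{-1,0,1,\dots\}$ by the distance hypothesis, and $|A|=n+1$. So I have $n+1$ integers, each $\ge -1$, summing to $0$, with sum of squares equal to $n(n+1)$. The combinatorial claim is that the only way this can happen is if one of them equals $n$ (forcing $z=x$ for that $x$, hence $z\in A$) and the rest equal $-1$: indeed if all $\langle x,z\rangle\le n-2$ then one checks the sum of squares is too small. More carefully: write $a_x=\langle x,z\rangle+1\ge 0$; then $\sum a_x = n+1$ and $\sum (a_x-1)^2 = n(n+1)$, i.e. $\sum a_x^2 - 2(n+1) + (n+1) = n(n+1)$, so $\sum a_x^2 = (n+1)^2$. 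By convexity (or since $a_x\le 2n$ and the $a_x$ are nonneg integers summing to $n+1$), $\sum a_x^2 \le (\max a_x)\sum a_x = (\max a_x)(n+1)$, forcing $\max a_x \ge n+1$, i.e. some $\langle x,z\rangle\ge n$. Since $\langle x,z\rangle\le n$ always with equality iff $x=z$, we get $z\in A$, as desired.

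The main obstacle — really the only subtle point — is making the ``$n+1$ integers, each $\ge-1$, with prescribed sum and sum of squares'' step airtight, in particular handling the inequality cleanly and noting that equality in $\sum a_x^2\le(\max a_x)(n+1)$ plus $\max a_x=n+1$ forces exactly one $a_x$ to be nonzero, so the remaining $\langle x,z\rangle$ are all $-1$ (consistent with $z$ being the reflection $-\sum_{x\neq x_0}x$, but we don't even need that). One should also double-check the edge interpretation: $n$ must be odd for $\tfrac{n+1}{2}$ to be an integer, which is automatic since a Hadamard matrix of order $n+1$ forces $n+1$ to be even (indeed a multiple of $4$ when $n>2$), so $r=\tfrac{n+1}{2}\in\Z$ and the scale makes sense. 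Everything else is a direct unwinding of Propositions~\ref{simplex_iff_basis} and~\ref{hada_obs} together with \eqref{hamming_innerprod}.
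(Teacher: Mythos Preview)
Your proof is correct and follows essentially the same approach as the paper's: both translate to inner products via \eqref{hamming_innerprod}, invoke the two identities of Proposition~\ref{hada_obs} to obtain $\sum_{x\in A}\langle x,z\rangle=0$ and $\sum_{x\in A}\langle x,z\rangle^2=n(n+1)$, and then finish with a short combinatorial argument. Your substitution $a_x=\langle x,z\rangle+1$ and the inequality $\sum a_x^2\le(\max_x a_x)\sum a_x$ is a slightly cleaner packaging of the endgame than the paper's (which instead bounds $|E|+|E|^2\ge n+n^2$ for $E=\{x:\langle x,z\rangle=-1\}$), but the two arguments are equivalent in spirit.
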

\begin{proof}
    \eqref{hamming_innerprod} tells us that $d(x,y)\leq r$ if and only if $\langle x,y\rangle\geq n-2r$ for any $x,y\in Q_n$.
    It is therefore clear that $\diam(A)={n+1\over 2}$ and is rigid since $\langle x,y\rangle=-1$ for every distinct $x,y\in A$.

    In order to prove that $A$ is maximally diameter-${n+1\over 2}$, we must show that for every $y\in Q_n\setminus A$, there is some $x\in A$ for which $\langle x,y\rangle<-1$.

    Suppose for the sake of contradiction that $y\in Q_n\setminus A$ has the property that $\langle x,y\rangle\geq -1$ for every $x\in A$.
    Define
    \[
        E\eqdef\{x\in A:\langle x,y\rangle=-1\},
    \]
    and observe that the first part of Proposition~\ref{hada_obs} implies that
    \begin{equation}\label{eqn:innersum}
        \sum_{x\in A}\langle x,y\rangle=0\quad\implies\quad \abs E=\sum_{x\in A\setminus E}\langle x,y\rangle.
    \end{equation}

    Next, recall that $\langle x,y\rangle\geq -1$ for every $x\in A$; therefore, $\langle x,y\rangle\geq 0$ for every $x\in A\setminus E$ since all vectors have integer entries.
    From this observation, \eqref{eqn:innersum} and the second part of Proposition~\ref{hada_obs}, we bound
    \begin{align*}
        \abs E+\abs E^2 &= \abs E +\biggl(\sum_{x\in A\setminus E}\langle x,y\rangle\biggr)^2 \geq \abs E + \sum_{x\in A\setminus E}\langle x,y\rangle^2=\sum_{x\in A}\langle x,y\rangle^2\\
                        &= y^T\biggl(\sum_{x\in A}xx^T\biggr)y=y^T\bigl((n+1)I\bigr)y =(n+1)\langle y,y\rangle=n+n^2.
    \end{align*}
    Therefore, $\abs E\geq n$ and so either $\abs E=n$ or $\abs E=n+1$.

    If $\abs E=n+1$, then \eqref{eqn:innersum} implies that $n+1=0$; a contradiction.
    Otherwise, $\abs E=n$ and so $A\setminus E=\{x\}$.
    But then \eqref{eqn:innersum} implies that $n=\langle x,y\rangle$ and so $x=y$.
    This contradicts the fact that $y\notin A$ and concludes the proof of the theorem.
\end{proof}

\section{Non-trivial homologies of \texorpdfstring{$\VR(Q_n;r)$ and $\VR\bigl(\level n\ell;r\bigr)$}{the Vietoris--Rips complex}} \label{sec:nontrivial_homo}

\subsection{Hypercube}

Note that for positive integers $n_1,\dots,n_k$, we may naturally identify the product of cubes $\prod_{i=1}^k Q_{n_i}$ with the cube $Q_n$ where $n=\sum_{i=1}^k n_i$.

\begin{prop}\label{direct_sum}
    Fix positive integers $n_1,\dots,n_k$ and non-negative integers $r_1,\dots,r_k$.
    If $A_i\in\rfacets(n_i;r_i)$ for each $i$, then $\prod_{i=1}^k A_i\in\rfacets\bigl(\sum_{i=1}^k n_i;\sum_{i=1}^k r_i\bigr)$.
\end{prop}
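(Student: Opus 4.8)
The plan is to verify directly that $A\eqdef\prod_{i=1}^k A_i$, viewed as a subset of $Q_n$ with $n=\sum_i n_i$, satisfies the three defining properties of a rigid facet: it is a simplex of diameter exactly $r\eqdef\sum_i r_i$, it is rigid, and it is maximally diameter-$r$ (the last of which, by the remark in Section~\ref{pre}, makes $A$ a facet). The single fact driving everything is that under the identification $Q_n\cong\prod_{i=1}^k Q_{n_i}$ the Hamming distance splits blockwise: writing $x=(x_1,\dots,x_k)$ and $y=(y_1,\dots,y_k)$ with $x_i,y_i\in Q_{n_i}$, one has $d(x,y)=\sum_{i=1}^k d(x_i,y_i)$, since coordinates belonging to different blocks are disjoint. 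Each $A_i$ is nonempty, so $A$ is nonempty.

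For the diameter: because $A$ is a product, the blocks of a pair $x,y\in A$ may be chosen independently, so $\diam(A)=\max_{x,y\in A}\sum_i d(x_i,y_i)=\sum_i\max_{x_i,y_i\in A_i}d(x_i,y_i)=\sum_i\diam(A_i)=r$; in particular $A$ is a simplex of $\VR(Q_n;r)$. For rigidity: fix $x=(x_1,\dots,x_k)\in A$; since $A_i$ is rigid there is $y_i\in A_i$ with $d(x_i,y_i)=\diam(A_i)=r_i$, and then $y=(y_1,\dots,y_k)\in A$ realizes $d(x,y)=\sum_i r_i=r=\diam(A)$, so $\max_{y\in A}d(x,y)=\diam(A)$.

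The substantive step is maximality, and I would first isolate the key uniform bound: for every $i$ and every $z_i\in Q_{n_i}$ one has $\max_{x_i\in A_i}d(x_i,z_i)\geq r_i$. Indeed, if $z_i\in A_i$ this is exactly rigidity of $A_i$ applied at $a=z_i$; if $z_i\notin A_i$, then maximality of $A_i$ gives $\diam(A_i\cup\{z_i\})>r_i$, which (since $\diam A_i=r_i$) forces some $x_i\in A_i$ with $d(x_i,z_i)>r_i$. Now take any $z=(z_1,\dots,z_k)\in Q_n\setminus A$. Since $z\notin\prod_i A_i$, there is an index $j$ with $z_j\notin A_j$, and maximality of $A_j$ supplies $x_j\in A_j$ with $d(x_j,z_j)\geq r_j+1$; for each $i\neq j$ the uniform bound supplies $x_i\in A_i$ with $d(x_i,z_i)\geq r_i$. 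Then $x=(x_1,\dots,x_k)\in A$ and $d(x,z)=\sum_i d(x_i,z_i)\geq(r_j+1)+\sum_{i\neq j}r_i=r+1$, so $\diam(A\cup\{z\})>r$. Hence $A$ is maximally diameter-$r$, and together with the preceding paragraph this gives $A\in\rfacets(n;r)$.

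I do not expect a serious obstacle; the only point requiring a moment's care is recognizing that it is \emph{rigidity} of the factors, not merely their maximality, that is needed — it is precisely rigidity that guarantees a distance of at least $r_i$ in those blocks where $z_i$ happens to already lie in $A_i$.
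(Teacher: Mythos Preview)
Your proof is correct and follows essentially the same approach as the paper: additivity of the Hamming distance across blocks gives the diameter and rigidity immediately, and maximality is obtained by using maximality of $A_j$ in a block where $z_j\notin A_j$ together with rigidity in the remaining blocks. The only cosmetic difference is that the paper reduces to the case $k=2$ and appeals to induction, whereas you handle general $k$ directly via the clean ``uniform bound'' $\max_{x_i\in A_i}d(x_i,z_i)\geq r_i$.
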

\begin{proof}
    It suffices to prove the claim when $k=2$ since the full claim then follows by a routine induction.

    Observe that the Hamming distance is additive over the product; that is $d(x,y)=d(x_1,y_1)+d(x_2,y_2)$ when $x=(x_1,x_2)$ and $y=(y_1,y_2)$.
    It is therefore clear that $\diam(A_1\times A_2)=\diam(A_1)+\diam(A_2)$ and that $A_1\times A_2$ is rigid.

    To see why $A_1\times A_2$ is maximally diameter-$r$, fix any $y=(y_1,y_2)\in Q_{n_1}\times Q_{n_2}$ and suppose that $y\notin A_1\times A_2$.
    Without loss of generality, suppose that $y_1\notin A_1$.
    Since $A_1$ is maximally diameter-$r_1$, we can locate $a_1\in A_1$ such that $d(y_1,a_1)>r_1$.
    Then, since $A_2$ is maximally diameter-$r_2$ and also rigid, we can locate $a_2\in A_2$ such that $d(y_2,a_2)\geq r_2$.
    Thus, with $a=(a_1,a_2)\in A_1\times A_2$, we have $d(y,a)>r_1+r_2=r$, which demonstrates that $A_1\times A_2$ is maximally diameter-$r$.
\end{proof}

It is at this point we can see the importance of the assumption of rigidity.
\begin{cor}\label{rigid_extends}
    $X\in\rfacets(n;r)$ if and only if $X\times\{q\}\in\rfacets(n+m;r)$ for every $m\geq 1$ and every $q\in Q_m$.
\end{cor}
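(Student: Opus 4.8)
The plan is to obtain the ``only if'' direction essentially for free from Proposition~\ref{direct_sum}, and to obtain the ``if'' direction by directly unwinding the definitions of rigidity and maximal diameter. The one preliminary observation needed is that every singleton $\{q\}\subseteq Q_m$ lies in $\rfacets(m;0)$: it has diameter $0$, it is (vacuously) rigid, and it is maximally diameter-$0$ since adjoining any other point of $Q_m$ raises the diameter to at least $1$.

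For the forward implication, suppose $X\in\rfacets(n;r)$ and fix any $m\geq 1$ and $q\in Q_m$. Applying Proposition~\ref{direct_sum} with $k=2$, $(n_1,r_1)=(n,r)$, $(n_2,r_2)=(m,0)$, $A_1=X$ and $A_2=\{q\}$ yields $X\times\{q\}=A_1\times A_2\in\rfacets(n+m;r+0)=\rfacets(n+m;r)$, as required.

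For the reverse implication it is enough to use a single instance of the hypothesis, say $m=1$ with $q$ the point $1\in Q_1$, so that $X\times\{q\}\in\rfacets(n+1;r)$ under the identification $Q_n\times Q_1=Q_{n+1}$. Because Hamming distance is additive over the product and the second coordinate is held constant, $d\bigl((x,q),(y,q)\bigr)=d(x,y)$ for all $x,y\in X$; hence $\diam(X)=\diam(X\times\{q\})=r$, and for each $x\in X$ we get $\max_{y\in X}d(x,y)=\max_{z\in X\times\{q\}}d\bigl((x,q),z\bigr)=r$ by rigidity of $X\times\{q\}$, so $X$ is rigid. Finally, if $y\in Q_n\setminus X$ then $(y,q)\in Q_{n+1}\setminus(X\times\{q\})$, and maximality of $X\times\{q\}$ produces $a\in X$ with $d(y,a)=d\bigl((y,q),(a,q)\bigr)>r$; thus $X$ is maximally diameter-$r$ and $X\in\rfacets(n;r)$.

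I do not expect a genuine obstacle: all the work sits in Proposition~\ref{direct_sum} and in the triviality that padding a set by a constant coordinate alters neither the distances inside it nor the set of points one could adjoin to it. The only points to watch are that the ``if'' direction needs just one instance whereas the ``only if'' direction must produce all of them, and that it is rigidity of $X\times\{q\}$ — a strictly stronger property than merely being a facet, as this very section emphasizes — that lets one transfer rigidity back to $X$.
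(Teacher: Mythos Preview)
Your proof is correct. The forward direction is identical to the paper's: apply Proposition~\ref{direct_sum} with $A_2=\{q\}\in\rfacets(m;0)$.

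For the reverse direction your route differs slightly from the paper's. You argue directly: since padding by a constant coordinate is an isometry onto the slice $Q_n\times\{q\}$, all three properties (diameter~$r$, rigidity, maximality) pull back immediately from $X\times\{q\}$ to $X$, and you never need to look outside that slice. The paper instead argues by contrapositive and focuses on the one case it deems non-obvious, namely when $X$ is a facet but \emph{not rigid}: it picks a point $x\in X$ with $\max_{y\in X}d(x,y)\leq r-1$ and observes that $(x,-1)\notin X\times\{1\}$ yet lies within distance $r$ of every point of $X\times\{1\}$, so the product is not even maximally diameter-$r$. Your argument is cleaner and self-contained; the paper's buys the explicit moral advertised just before the corollary---that without rigidity the extension to higher cubes genuinely fails---by exhibiting the offending point that leaves the padded slice.
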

\begin{proof}
    Proposition~\ref{direct_sum} verifies the forward implication since $\{q\}\in\rfacets(m;0)$ for any $q\in Q_m$.

    For the other direction, the only situation which merits attention is when $X$ fails to be rigid.
    In this case, there is some $x\in X$ for which $d(x,y)\leq r-1$ for all $y\in X\setminus\{x\}$.
    But then, the point $(x,-1)$ is not an element of $X\times\{1\}\subseteq Q_{n+1}$, yet has distance at most $r$ from every point of $X\times\{1\}$, which demonstrates that $X\times\{1\}$ fails to be maximally diameter-$r$.
\end{proof}

We can now establish one additional non-trivial homology in $\VR(Q_n;r)$ for infinitely many values of $r$.

\begin{theorem}\label{nt_homology}
    If a Hadamard matrix of order $2r$ exists, then the $(2r-1)$-dimensional homology of $\VR(Q_n;r)$ is non-trivial for all $n\geq 2r-1$.
\end{theorem}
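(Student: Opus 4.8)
The plan is to build an explicit cross-polytopal subcomplex of $\VR(Q_n;r)$ on $2\cdot 2r = 4r$ vertices whose top cycle generates a nonzero class in $H_{2r-1}$, then invoke Proposition~\ref{cross_p_injective}. The natural candidate for the $2r$ "antipodal pairs" comes from a Hadamard simplex: since a Hadamard matrix of order $2r$ exists, Proposition~\ref{simplex_iff_basis} gives a Hadamard simplex $A = \{a_1,\dots,a_{2r}\} \subseteq Q_{2r-1}$, and by Theorem~\ref{hada_simplex} we have $A \in \rfacets(2r-1; r)$ — note $\tfrac{(2r-1)+1}{2} = r$, which is exactly why the order $2r$ hypothesis is the right one. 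The antipode of each $a_i$ in $Q_{2r-1}$ is $-a_i$; since $\langle a_i, -a_i\rangle = -(2r-1) = n - 2d$ forces $d(a_i,-a_i) = 2r-1 > r$ once $r \geq 1$, the pair $\{a_i, -a_i\}$ is a "long" pair, while $d(a_i, a_j) = r$ and (using Proposition~\ref{hada_obs}, $\langle a_i,a_j\rangle = -1$ and $\langle -a_i, a_j\rangle = +1$, so $d(-a_i,a_j) = r-1$) every cross pair $\{a_i, -a_j\}$ with $i\neq j$ has distance $r-1 \leq r$. Hence the $4r$ points $\{\pm a_1,\dots,\pm a_{2r}\}$ span exactly the cross-polytope: a subset is a simplex iff it contains at most one of each pair $\{a_i,-a_i\}$. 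This gives a cross-polytopal subcomplex $L$ with $2\ell = 4r$ vertices, so $\ell = 2r$ and $L$ carries a $(2r-1)$-cycle.

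Next I would promote this from $Q_{2r-1}$ to $Q_n$ for every $n \geq 2r-1$. If $n > 2r-1$, write $n = (2r-1) + m$ with $m \geq 1$, fix any $q \in Q_m$, and replace each point $x \in \{\pm a_i\}$ by $(x, q) \in Q_n$; distances between the promoted points are unchanged since the second coordinate is constant, so $L$ embeds as a subcomplex of $\VR(Q_n;r)$ in the same way. The key remaining point is that some simplex of $L$ is maximal in $\VR(Q_n;r)$ — this is exactly the hypothesis of Proposition~\ref{cross_p_injective}. For this I would use that $A$ itself is a face of $L$ (it is one of the $2^{2r}$ maximal simplices of the cross-polytope, selecting $a_i$ from each pair), that $A \in \rfacets(2r-1;r)$ by Theorem~\ref{hada_simplex}, and hence by Corollary~\ref{rigid_extends} that $A \times \{q\} \in \rfacets(n;r)$ — so $A\times\{q\}$ is a facet of $\VR(Q_n;r)$ lying inside $L$. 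Then Proposition~\ref{cross_p_injective} yields that the inclusion $L \hookrightarrow \VR(Q_n;r)$ is injective on $H_{2r-1}$, so $H_{2r-1}(\VR(Q_n;r)) \neq 0$.

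The main obstacle — really the only nontrivial verification — is confirming that the $4r$ points span \emph{precisely} the boundary complex of the cross-polytope and nothing more: I must check both that every "allowed" pair is within distance $r$ (done above via the inner-product computation $\langle -a_i, a_j\rangle = +1$, using $\sum_x xx^T = (2r)I$ or just bilinearity from $\langle a_i,a_j\rangle=-1$) \emph{and} that $\{a_i,-a_i\}$ is genuinely forbidden, i.e. $d(a_i,-a_i) > r$, which needs $2r-1 > r$, valid for all $r \geq 1$ (the statement is vacuous or trivial for tiny $r$ anyway since Hadamard matrices of order $2$ force $r=1$, where $\VR(Q_n;1)=Q_n$ has $H_1 \neq 0$ for $n \geq 1$). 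A secondary subtlety is that $L$ as defined might a priori have \emph{extra} simplices beyond the cross-polytope if some $a_i = -a_j$, but distinct columns of a Hadamard matrix of order $\geq 2$ are never antipodal (their inner product would be $-(2r-1) \neq -1$ unless $r=1$, and even then the matrix has distinct columns), so the $4r$ points are genuinely distinct and the pairing is well-defined. Everything else is an application of the already-established Propositions~\ref{cross_p_injective}, \ref{simplex_iff_basis}, \ref{hada_obs} and Corollary~\ref{rigid_extends}.
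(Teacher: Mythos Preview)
Your proposal is correct and follows essentially the same route as the paper: build the cross-polytopal subcomplex on $(A\cup -A)\times\{q\}$ from a Hadamard simplex $A\subseteq Q_{2r-1}$, note that $A\times\{q\}$ is a facet via Theorem~\ref{hada_simplex} and Corollary~\ref{rigid_extends}, and apply Proposition~\ref{cross_p_injective}. You actually supply the distance computations that the paper relegates to ``easy to check''; the only slip is the inequality $2r-1>r$, which needs $r\geq 2$ rather than $r\geq 1$ (the $r=1$ case degenerates since $A=-A$ in $Q_1$, an edge case the paper also glosses over).
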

\begin{proof}
    By Proposition~\ref{simplex_iff_basis}, there is a Hadamard simplex $A\subseteq Q_{2r-1}$.
    Consider the set
    \[
        Y\eqdef(A\cup -A)\times\{\mathbf 1_{n-2r+1}\}\subseteq Q_n.
    \]
    It is easy to check that the subcomplex $\VR(Y;r)$ is cross-polytopal with $4r$ many vertices.
    Of course, Theorem~\ref{hada_simplex} and Corollary~\ref{rigid_extends} tell us that $A\times\{\mathbf 1_{n-2r+1}\}\subseteq Y$ is a facet of $\VR(Q_n;r)$ and so we conclude that the $(2r-1)$-dimensional homology of $\VR(Q_n;r)$ is non-trivial by applying Proposition~\ref{cross_p_injective}.
\end{proof}

\begin{exam} Fix $r=4$. The computation result shows that in $\VR(Q_6, 4)$ the rank of $7$th homology is $239$ and the rank of $15$th homology is $14$ using the coefficient group $\mathbb Z/2$. The computational result is used to provide a lower bound of the $7$th homology rank. However no information about the generators of $7$th homology is provided. Let $H$ be a normalized Hadamard matrix of order $4$. We find that removing the first two rows yields a facet in $\VR(Q_6; 4)$ which could be naturally extended to a homology generators of dimension $7$ as in the proof of Theorem~\ref{nt_homology}.

 Theorem 7.2 in \cite{AV24} guarantees that there are new topological features appearing for $n>6$, but it didn't provide a way to identify the new features. Using Theorem~\ref{nt_homology}, we are able to capture some of these new features and in fact we obtain that there is a new $7$th homology generator appearing at $n=7$. \end{exam}

In Table~\ref{tab:nt_homo}, we give a list of known non-trivial homologies for some $\VR(Q_n; r)$ from Theorem~\ref{nt_homology} and results in \cite{AV24}.
\begin{table}
\caption{Known non-trivial homologies in $\VR(Q_n; r)$ by Theorem~\ref{nt_homology}}
\label{tab:nt_homo}
\begin{tabular}{ |c|c|c|c| }
 \hline
 $n$ & $r$  & known non-trivial homology dimensions  \\

 \hline

 10 & 4 & 7, 15   \\
 \hline
 15 & 6 & 11, 63\\
 \hline
 20 & 8 & 15, 255\\
 \hline
 30 & 10 & 19, 1023 \\
 \hline

\end{tabular}

\end{table}

\subsection{Level-sets}

For a vector $x\in Q_n$, define $w(x)$ to be the number of $1$'s in $x$.
For non-negative integers $n,\ell$, the $\ell$th level of $Q_n$ is defined to be
\[
    \level n\ell\eqdef\{x\in Q_n: w(x)=\ell\}.
\]
Note that $\level n\ell$ can be naturally identified with the set of all $\ell$-sized subsets of $[n]$.
Observe that $\level{n_1}{\ell_1}\times\level{n_2}{\ell_2}$ can be naturally identified with a \emph{subset} of $\level{n_1+n_2}{\ell_1+\ell_2}$.

The Kneser graph, KG$(n, \ell)$, is the graph with vertices
being the collection of all $\ell$-subsets of $[n]$ and any pair of vertices being adjacent
if they have empty intersection. For any graph $G=(V, E)$, the independence complex of $G$, Ind$(G)$, is the simplicial complex with the simplices being the independent sets in the graph $G$. As noted in \cite{ZW24}, the independence complex of  KG$(n, \ell)$ is naturally a subcomplex of the Vietoris--Rips complex of the hypercube $Q_n$ with scale $2\ell-2$ in the following way.

For each $x\in Q_n$, define $v_x=\{i\in [n]: x(i)=1\}$. Then for each $x\in \level n\ell$, $v_x$ is a vertex in the Kneser graph, KG($n, \ell$). Also notice that $v_x\cap v_y\neq \emptyset$ if and only if $d(x, y)\leq 2\ell-2$. Therefore, the complex $\VR(\level n\ell; 2\ell-2)$ is identical to the independence complex Ind(KG$(n, \ell))$. Next we identify a new non-trivial homology in the complex $\VR(\level n\ell;r)$ if the Hadamard matrix of order $2r$ exists.

Consider a normalized Hadamard matrix $H$ of order $h$ and define $H^-$ to be the set of all columns of $H$ except for the first.
Since every element of $H^-$ is orthogonal to $\mathbf 1_h$, we see that $H^-\subseteq\level h{h/2}$.

\begin{prop}\label{hadamard_level}
    Suppose that $H$ is a normalized Hadamard matrix of order $2r$ for some $r\geq 4$.
    Fix integers $n,\ell$ with $\ell\geq r$ and $n\geq\ell+r$ and set
    \[
        A=H^-\times\{\mathbf 1_{\ell-r}\}\times \{-\mathbf 1_{n-\ell-r}\}.
    \]
    Then $A$ is a facet of $\VR(\level n\ell;r)$.
\end{prop}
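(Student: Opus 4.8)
The plan is to show that $A$ is maximally diameter-$r$ as a subset of the metric space $\level n\ell$; the conclusion is then immediate from the observation in Section~\ref{pre} that any maximally diameter-$r$ set spans a facet. The easy half is quick: each $h\in H^-$ is orthogonal to the all-ones first column of $H$, so $w(h)=r$ and hence $A\subseteq\level n\ell$, and since distinct columns of a Hadamard matrix are orthogonal, $d(h,h')=\tfrac12(2r-\langle h,h'\rangle)=r$ for distinct $h,h'\in H^-$, while the two frozen coordinate blocks contribute nothing to distances. Thus $\diam(A)=r$ and $A$ is a (rigid) simplex of $\VR(\level n\ell;r)$, and the real content is that $A$ admits no enlargement.

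The engine is a pair of identities for $H^-$. Let $M$ be the $2r\times(2r-1)$ matrix whose columns are the elements of $H^-$, so that $H=[\mathbf 1_{2r}\mid M]$. From $HH^T=2rI$ we get $MM^T=2rI-J$ with $J$ the all-ones matrix, and from normality of $H$ (every row other than the first is orthogonal to the all-ones first row, hence balanced) we get $\sum_{h\in H^-}h=2r\mathbf e_1-\mathbf 1_{2r}$. Consequently, for any $z\in Q_{2r}$, writing $z_1$ for its first entry and $s=\langle\mathbf 1_{2r},z\rangle$,
\[
 \sum_{h\in H^-}\langle h,z\rangle = 2rz_1-s
 \qquad\text{and}\qquad
 \sum_{h\in H^-}\langle h,z\rangle^2 = z^T(2rI-J)z = 4r^2-s^2 .
\]

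For maximality I would argue by contradiction. Suppose $y\in\level n\ell\setminus A$, written $y=(z,u,v)$ along the three blocks of lengths $2r$, $\ell-r$, $n-\ell-r$, satisfies $d(x,y)\le r$, hence $\langle x,y\rangle\ge n-2r$, for every $x\in A$. For $x$ with first block $h\in H^-$ we have $\langle x,y\rangle=\langle h,z\rangle+a-b$ where $a=\langle\mathbf 1_{\ell-r},u\rangle$ and $b=\langle\mathbf 1_{n-\ell-r},v\rangle$, so the hypothesis reads $\langle h,z\rangle\ge c$ for all $h\in H^-$, where $c=n-2r-a+b$. From $a\le\ell-r$ and $b\ge-(n-\ell-r)$ one gets $c\ge0$, and $c$ is an even integer; summing $\langle h,z\rangle\ge c$ over $H^-$ and using the first identity together with $z_1\le1$ and $s\ge-2r$ gives $(2r-1)c\le2rz_1-s\le4r$, so $c\in\{0,2\}$. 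Expanding $w(z)+w(u)+w(v)=\ell$ in terms of $a$ and $b$ yields the bookkeeping identity $a=\ell-w(z)-c/2$, so $a\le\ell-r$ forces $w(z)\ge r-c/2$. It remains to eliminate each value of $c$. If $c=0$, then equality holds in the bound $c\ge0$, which forces $a=\ell-r$ and $b=-(n-\ell-r)$, i.e.\ $u=\mathbf 1_{\ell-r}$ and $v=-\mathbf 1_{n-\ell-r}$, hence $w(z)=r$ and $s=0$; now $\sum_h\langle h,z\rangle=2rz_1$ being a sum of nonnegatives forces $z_1=1$ and $\sum_h\langle h,z\rangle=2r$, while $\sum_h\langle h,z\rangle^2=4r^2$, so $\bigl(\sum_h\langle h,z\rangle\bigr)^2=\sum_h\langle h,z\rangle^2$, which for nonnegative summands forces exactly one $\langle h_0,z\rangle$ to be nonzero and equal to $2r$; Cauchy--Schwarz then gives $z=h_0\in H^-$, whence $y\in A$, a contradiction. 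If $c=2$, then summing $\langle h,z\rangle\ge2$ over the $2r-1$ columns forces $2rz_1-s\ge4r-2$, hence $s\le2-2r$; the value $s=-2r$ is impossible (it would give $z=-\mathbf 1_{2r}$ and then every $\langle h,z\rangle=0$), so $s=2-2r$, i.e.\ $w(z)=1$, contradicting $w(z)\ge r-1\ge3$.

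I expect the genuine obstacle to be disposing of the spurious value $c=2$: in the $c=0$ case the orthogonality identities alone pin $z$ down, but for $c=2$ they do not, and one must feed the otherwise-unused constraint $y\in\level n\ell$ back into the inner-product count to force $w(z)=1$. This is exactly the point at which the hypothesis $r\ge4$ is used; indeed for $r=2$ there really is a $y\in\level n\ell\setminus A$ within distance $2$ of all of $A$, so the statement fails without some such lower bound on $r$.
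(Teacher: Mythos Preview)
Your proof is correct and takes a genuinely different route from the paper's. The paper invokes Theorem~\ref{hada_simplex} and Corollary~\ref{rigid_extends} as a black box (the fact that $H^-\cup\{\mathbf 1_{2r}\}$ is a rigid facet of $\VR(Q_{2r};r)$) and then splits on $w(x_1)$: when $w(x_1)\geq r$ the facet property pins $x_1$ to $H^-\cup\{\mathbf 1_{2r}\}$ and each sub-case is handled directly; when $w(x_1)=r-s$ with $s\geq 1$, the level constraint forces $d(x_1,h)\leq r-s$ for every $h$, and a counting argument on the ${r+s\choose s}$ ways to flip $s$ coordinates of $x_1$ back into $H^-$ yields $s=1$ and hence two columns of $H^-$ at Hamming distance $2$, which is where $r\geq 4$ enters. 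You instead work from scratch with the moment identities $\sum_h\langle h,z\rangle=2rz_1-s$ and $\sum_h\langle h,z\rangle^2=4r^2-s^2$, parametrize by the slack $c=n-2r-a+b$, and reduce to $c\in\{0,2\}$; the level-set constraint is packaged into the single bookkeeping identity $a=\ell-w(z)-c/2$, and $r\geq 4$ appears only to rule out $w(z)=1$ in the $c=2$ branch. Your argument is self-contained and arguably cleaner in its case structure, essentially re-deriving a level-set analogue of Theorem~\ref{hada_simplex} in situ; the paper's modular version, by contrast, would port more readily if the Hadamard simplex were replaced by some other rigid facet.
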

\begin{proof}
    By construction, $H^-\subseteq\level{2r}r$, and so $A\subseteq\level n\ell$.
    We need to argue that $A$ is maximally diameter-$r$ in $\level n\ell$.

    It is clear that $A$ has diameter $r$, so fix some $x\in\level n\ell\setminus A$ and suppose for the sake of contradiction that $d(x,a)\leq r$ for all $a\in A$.
    Begin by decomposing $x=(x_1,x_2,x_3)\in Q_{2r}\times Q_{\ell-r}\times Q_{n-\ell-r}$.
    Since $d(x,a)\leq r$ for all $a\in A$, it must be the case that $d(x_1,h)\leq r$ for all $h\in H^-$.

    Consider first the situation when $w(x_1)\geq r$; so additionally $d(x_1,\mathbf 1_{2r})\leq r$.
    Thanks to Theorem~\ref{hada_simplex} and Corollary~\ref{rigid_extends}, we know that the set $H^-\cup\{\mathbf 1_{2r}\}$ is maximally diameter-$r$ and rigid in $Q_{2r}$ and so it must be the case that $x_1\in H^-\cup\{\mathbf 1_{2r}\}$.

    Suppose first that $x_1\in H^-$.
    Since $x\notin A$, it must be the case that $(x_2,x_3)\neq(\mathbf 1_{\ell-r},\mathbf 1_{n-\ell-r})$.
    Therefore, if $h\in H^-\setminus\{x_1\}$, then the point $a=(h,\mathbf 1_{\ell-r},\mathbf 1_{n-\ell-r})\in A$ has $d(x,a)\geq d(x_1,h)+1=r+1$; a contradiction.

    Next suppose that $x_1=\mathbf 1_{2r}$.
    Fix any $h\in H^-$ and consider the point $a=(h,\mathbf 1_{\ell-r},-\mathbf 1_{n-\ell-r})\in A$.
    Then
    \[
        d(x,a) =r+\bigl(\ell-r-w(x_2)\bigr)+w(x_3)=w(x)-w(x_2)+w(x_3)\geq w(x_1)=2r;
    \]
    another contradiction.

    We may therefore suppose that $w(x_1)=r-s$ where $s\geq 1$.
    Note that
    \[
        \ell=w(x)=w(x_1)+w(x_2)+w(x_3)\leq (r-s)+(\ell-r)+w(x_3)\implies w(x_3)\geq s.
    \]
    Then for any $a=(h,\mathbf 1_{\ell-r},-\mathbf 1_{n-\ell-r})\in A$,
    \begin{align*}
        r &\geq d(x,a)=d(x_1,h)+\bigl(\ell-r-w(x_2)\bigr)+w(x_3)\\
          &=d(x_1,h)+w(x_1)+2w(x_3)-r\geq d(x_1,h)+s.
    \end{align*}
    Therefore, $d(x_1,h)\leq r-s$.
    In particular, if $x_1'$ is formed by replacing any $s$ of the $-1$'s in $x_1$ by $1$'s, then $w(x_1')=r$ and $d(x_1',h)\leq r$ for all $h\in H^-$.
    By the work done earlier, the only way for this to be possible is if $x_1'\in H^-$.
    Of course, $\abs{H^-}=2r-1$, yet there are ${r+s\choose s}$ many possibilities for $x_1'$, and so we must have $s=1$.
    However, this implies that replacing any single $-1$ in $x_1$ by a $1$ yields an element of $H^-$, meaning that some pair of elements of $H^-$ are at distance $2$.
    This is impossible, however, since every pair of points of $H^-$ are at distance $r\geq 4$, which concludes the proof of the claim.
\end{proof}

\begin{theorem}\label{ntrvl_hmlg_kg}
    If a Hadamard matrix of order $2r$ exists and $r\geq 4$, then the $(2r-2)$-dimensional homology of $\VR(\level n\ell;r)$ is non-trivial for all integers $n,\ell$ satisfying $\ell\geq r$ and $n\geq\ell+r$.
\end{theorem}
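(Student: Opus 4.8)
The plan is to imitate the proof of Theorem~\ref{nt_homology}: use Proposition~\ref{hadamard_level} to produce a facet $A$ of $\VR(\level n\ell;r)$ coming from a Hadamard matrix of order $2r$, exhibit a cross-polytopal subcomplex of $\VR(\level n\ell;r)$ one of whose maximal simplices is $A$, and then quote Proposition~\ref{cross_p_injective}. The only genuinely new ingredient is choosing the correct ``antipodes'' of the vertices of $A$ inside the level set $\level n\ell$, since the global negation $x\mapsto -x$ used in Theorem~\ref{nt_homology} changes the weight and would leave $\level n\ell$.

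First I would fix a normalized Hadamard matrix $H$ of order $2r$ and recall from Proposition~\ref{hadamard_level} that $H^-\subseteq\level{2r}r$ consists of $2r-1$ pairwise orthogonal $\{\pm1\}$-vectors, hence pairwise at Hamming distance exactly $r$ by \eqref{hamming_innerprod}. For each $h\in H^-$ the vector $-h$ again lies in $\level{2r}r$, and \eqref{hamming_innerprod} together with orthogonality gives $d(h,-h)=2r$ and $d(h,-h')=d(-h,-h')=d(h,h')=r$ for distinct $h,h'\in H^-$. Since appending the fixed tail $\{\mathbf 1_{\ell-r}\}\times\{-\mathbf 1_{n-\ell-r}\}$ adds weight $\ell-r$ to every vector, both $H^-$ and $-H^-$ extended by this tail land in $\level n\ell$ (using $\ell\ge r$ and $n\ge\ell+r$), so I would set
\[
    Y\eqdef(H^-\cup -H^-)\times\{\mathbf 1_{\ell-r}\}\times\{-\mathbf 1_{n-\ell-r}\}\subseteq\level n\ell.
\]

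Next I would check that $\VR(Y;r)$ is cross-polytopal with $4r-2$ vertices. The distance computations above, together with the fact that $\langle h_i,h_j\rangle\in\{0,2r\}$ forces $h_i\neq-h_j$, show that $Y$ consists of $2(2r-1)$ distinct points, that the only pairs of $Y$ at distance exceeding $r$ are the antipodal pairs $\{(h,\dots),(-h,\dots)\}$, and that every other pair is at distance exactly $r$. As $\VR(\,\cdot\,;r)$ is a clique complex, $\VR(Y;r)$ is precisely the boundary of the cross-polytope on these $4r-2$ vertices with antipodal pairing $h\leftrightarrow-h$, and so it carries a $(2r-2)$-dimensional cycle generating its $(2r-2)$-dimensional homology. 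Finally, $A\eqdef H^-\times\{\mathbf 1_{\ell-r}\}\times\{-\mathbf 1_{n-\ell-r}\}$ is one of the facets of the cross-polytope $\VR(Y;r)$ (take the ``$+h$'' vertex from each antipodal pair), and it is a facet of $\VR(\level n\ell;r)$ by Proposition~\ref{hadamard_level}. Hence Proposition~\ref{cross_p_injective} applies with $L=\VR(Y;r)$ and $K=\VR(\level n\ell;r)$: the inclusion $L\hookrightarrow K$ is injective on $(2r-2)$-dimensional homology, so $H_{2r-2}(\VR(\level n\ell;r))\neq0$.

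The substantive obstacle---showing that the Hadamard-derived set $A$ is actually a \emph{facet}, not merely a simplex, of $\VR(\level n\ell;r)$---has already been surmounted in Proposition~\ref{hadamard_level}. What remains is the short verification that the half-and-half set $Y$ has the cross-polytope adjacency pattern; the only points requiring care there are the distinctness of the $4r-2$ vertices and the claim that no non-antipodal pair wanders past distance $r$, both of which reduce to the inner-product identity \eqref{hamming_innerprod}. I do not anticipate any genuine difficulty beyond this bookkeeping.
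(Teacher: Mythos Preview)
Your proposal is correct and follows essentially the same route as the paper's own proof: both define $Y=(H^-\cup -H^-)\times\{\mathbf 1_{\ell-r}\}\times\{-\mathbf 1_{n-\ell-r}\}$, observe that $\VR(Y;r)$ is cross-polytopal on $2(2r-1)$ vertices, and invoke Proposition~\ref{hadamard_level} together with Proposition~\ref{cross_p_injective}. Your write-up in fact supplies more detail than the paper on the distance and distinctness checks (which the paper dismisses as ``easy to check''), and your remark about why one negates only the $H^-$ block rather than the full vector is a helpful clarification.
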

\begin{proof}
    Let $H$ be a normalized Hadamard matrix of order $2r$ and consider the set
    \[
        Y\eqdef (H^-\cup -H^-)\times\{\mathbf 1_{\ell-r}\}\times\{-\mathbf 1_{n-\ell-r}\}.
    \]
    Since $H^-\subseteq\level{2r}r$, we have also that $-H^-\subseteq\level{2r}r$ and so $Y\subseteq\level n\ell$.
    It is then easy to check that the subcomplex $\VR(Y;r)$ is cross-polytopal with $2(2r-1)$ many vertices.
    Finally, Proposition~\ref{hadamard_level} tells us that $H^-\times\{\mathbf 1_{\ell-r}\}\times\{-\mathbf 1_{n-\ell-r}\}\subseteq Y$ is a facet of $\VR(\level n\ell;r)$, and so we conclude that the $(2r-2)$-dimensional homology of $\VR(\level n\ell;r)$ is non-trivial by applying Proposition~\ref{cross_p_injective}.
\end{proof}

The homotopy type of $\VR(\mathcal{L}_\ell^{n}; 2)$ is given in \cite{Bar13} and \cite{FN24}.  However, not much is known about the topological properties of the complexes  $\VR(\mathcal{L}_\ell^{n}; r)$ for $\ell\geq 3$, $r\geq 4$, and $n$ sufficiently  large. In \cite{ZW24}, the authors obtained a lower bound for the rank of $p$-dimensional homology of $\VR(\mathcal{L}_\ell^{n}; 2\ell-2)$ for $\ell\geq 3$ and $n$ being sufficient large, here $p={1\over 2}{2\ell \choose \ell}-1$.
The results in \cite{ZW24} show that the complex $\VR(\mathcal{L}^{n}_3; 4)$ for $n\geq 7$ has non-trivial homologies in dimensions $6$ and $9$ through the help of projective plane of order $2$. By Theorem~\ref{ntrvl_hmlg_kg}, the $6$th  homology of the complex $\VR(\mathcal{L}^{n}_4; 4)$ is non-trivial for $n\geq 8$.






\section{Rigid facets of \texorpdfstring{$\VR(Q_n;r)$}{the Vietoris--Rips complex}}\label{sec:facets}

Recall that $\rfacets(n;r)$ is the set of all rigid facets of $\VR(Q_n;r)$.
As discussed in the introduction, most properties of the Vietoris--Rips complex are understood by first locating an appropriate rigid simplex.
It is therefore natural to wonder how ``complicated'' the set $\rfacets(n;r)$ is.
While there are many ways to quantify ``complicated'', we will consider one of the most basic: how many different dimensions are seen in $\rfacets(n;r)$?
Explicitly, define
\[
    \dfacets(n;r)\eqdef\abs{\{\dim \sigma:\sigma\in \rfacets(n;r)\}}.
\]

By using Hadamard simplices, we will show that $\dfacets(n;r)$ is large.
To do so, we need the following estimate on the number of solutions to a linear inequality.

\begin{prop}\label{number_solu}
    For any positive numbers $s_1,\dots,s_n$, there are at least $\prod_{i=1}^n(s_i/i)$ many non-negative integer solutions to the inequality
    \[
        {x_1\over s_1}+\dots+{x_n\over s_n}\leq 1.
    \]
\end{prop}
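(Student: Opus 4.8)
The plan is to prove this by induction on $n$, with the key idea being to isolate the variable $x_n$ and count how many valid values it can take before recursing on the remaining $n-1$ variables. First I would handle the base case $n=1$: the inequality $x_1/s_1 \le 1$ has exactly $\lfloor s_1 \rfloor + 1$ non-negative integer solutions, and since $\lfloor s_1 \rfloor + 1 > s_1 = s_1/1$, the bound holds. (One must be mildly careful that the claimed bound $\prod (s_i/i)$ need not be an integer, so the statement is really ``at least $\lceil \prod(s_i/i)\rceil$'' in spirit; the inequality $\ge \prod(s_i/i)$ is what we verify, and it is clean because the number of solutions is a genuine integer.)

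For the inductive step, fix a value $x_n = k$ for each integer $k$ with $0 \le k \le s_n/n$ — there are at least $s_n/n$ such values in the sense that the count $\lfloor s_n/n\rfloor + 1 \ge s_n/n$ when $s_n/n$ is used as a lower bound, but to be safe I would instead restrict to $0 \le k < s_n/n$, giving at least $\lceil s_n/n \rceil \ge s_n/n$ choices only if $s_n/n$ is not too small; the cleanest route is to note that the number of integers $k \ge 0$ with $k \le s_n/n$ is $\lfloor s_n/n \rfloor + 1 \ge s_n/n$. For each such $k$, the residual inequality on $x_1,\dots,x_{n-1}$ is
\[
    {x_1\over s_1}+\dots+{x_{n-1}\over s_{n-1}} \le 1 - {k\over s_n}.
\]
The difficulty is that the right-hand side is now smaller than $1$, so the inductive hypothesis does not apply verbatim. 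I would resolve this by rescaling: if I set $s_i' = s_i\bigl(1 - k/s_n\bigr)$ for $i < n$, the residual inequality becomes $\sum_{i<n} x_i/s_i' \le 1$, which by induction has at least $\prod_{i=1}^{n-1}(s_i'/i) = \bigl(1-k/s_n\bigr)^{n-1}\prod_{i=1}^{n-1}(s_i/i)$ solutions. This requires the $s_i'$ to be positive, which holds precisely because $k < s_n$ (ensured since $k \le s_n/n < s_n$).

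Summing over the admissible values of $k$, the total number of solutions is at least
\[
    \Biggl(\prod_{i=1}^{n-1}{s_i\over i}\Biggr)\sum_{0\le k\le s_n/n}\biggl(1-{k\over s_n}\biggr)^{n-1}.
\]
So the main obstacle — and the crux of the argument — is the elementary estimate
\[
    \sum_{0\le k\le s_n/n}\biggl(1-{k\over s_n}\biggr)^{n-1}\ge {s_n\over n}.
\]
I would prove this by comparing the sum to an integral: since $t\mapsto (1-t/s_n)^{n-1}$ is decreasing, $\sum_{k\ge 0, k\le s_n/n}(1-k/s_n)^{n-1} \ge \int_0^{s_n/n}(1-t/s_n)^{n-1}\,dt$ over the matching range, and the integral evaluates (via the substitution $u = 1-t/s_n$) to $\frac{s_n}{n}\bigl[1 - (1-1/n)^n\bigr] \ge \frac{s_n}{n}\cdot\frac{1}{n}\cdot\text{something}$ — wait, that is too weak. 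Instead I would keep only the single term $k=0$, which contributes $1$, together with... no: the right comparison is to bound the sum below by $\lfloor s_n/n\rfloor + 1$ times the smallest term is wrong too since terms shrink. The correct and simplest move is: the sum of $(1-k/s_n)^{n-1}$ over $0 \le k \le s_n/n$ dominates $\int_{0}^{\lceil s_n/n\rceil}(1 - t/s_n)^{n-1}dt$... Let me instead bound each of the $\lfloor s_n/n\rfloor+1$ terms from below by $(1 - 1/n)^{n-1} \cdot(\text{no})$. The honest fix: $\sum_{k=0}^{\lfloor s_n/n\rfloor}(1-k/s_n)^{n-1} \ge \int_0^{s_n/n+1}(1-t/s_n)^{n-1}\,\mathbf{1}[t\le s_n]\,dt$; more carefully, by the integral test for decreasing functions $\sum_{k=0}^{m}f(k) \ge \int_0^{m+1}f$, so with $m = \lfloor s_n/n\rfloor$ and $f(t)=(1-t/s_n)^{n-1}$ we get $\ge \int_0^{m+1}(1-t/s_n)^{n-1}dt \ge \int_0^{s_n/n}(1-t/s_n)^{n-1}dt = \frac{s_n}{n}\cdot\frac{n}{?}$... evaluating: $\int_0^{s_n/n}(1-t/s_n)^{n-1}dt = s_n\cdot\frac{1}{n}\bigl[1-(1-1/n)^n\bigr]\cdot\frac{?}{}$ — precisely $= \frac{s_n}{n}\bigl(1-(1-\tfrac1n)^n\bigr)\cdot\frac{n}{n}$, hmm the constant $1-(1-1/n)^n$ is only $\ge 1/2$ for $n\ge 1$, giving $\ge s_n/(2n)$, which loses a factor of $2$. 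I suspect the intended argument simply bounds the sum below by its first term plus a cruder count, or uses that there are $\ge s_n/n$ terms each $\ge$ the last one — and the last term $(1 - \lfloor s_n/n\rfloor/s_n)^{n-1} \ge (1-1/n)^{n-1}$, which again is not $\ge 1$. The resolution I would ultimately adopt: prove the slightly different but sufficient recursion using $x_n$ ranging over $0 \le k \le s_n(1 - \sum_{i<n}x_i/s_i)$ after fixing $x_1,\dots,x_{n-1}$ first — i.e. induct with the *last* variable innermost and summed last is what fails; fixing it *first* and letting the recursion absorb the shrinkage via rescaling, then using the sharper elementary inequality $\sum_{k=0}^{m}(1-k/(m' ))^{n-1}$ bounded by keeping enough terms. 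This elementary inequality is where all the real work lies, and I would state and prove it as a small separate lemma before assembling the induction; everything else is bookkeeping.
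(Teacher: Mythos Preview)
Your inductive approach can be made to work, but you create your own difficulty by restricting $k$ to the range $0\le k\le s_n/n$. There is no reason to do this: for \emph{every} integer $k$ with $0\le k<s_n$ the rescaled parameters $s_i'=s_i(1-k/s_n)$ are positive, so the induction hypothesis applies. Summing over that full range, the estimate you need is simply
\[
    \sum_{0\le k<s_n}\Bigl(1-\frac{k}{s_n}\Bigr)^{n-1}\ \ge\ \int_0^{s_n}\Bigl(1-\frac{t}{s_n}\Bigr)^{n-1}\,dt\ =\ \frac{s_n}{n},
\]
which holds term by term because $t\mapsto(1-t/s_n)^{n-1}$ is decreasing on $[0,s_n]$ and each summand $f(k)$ dominates $\int_k^{\min(k+1,s_n)}f$. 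Your restriction to $k\le s_n/n$ discards most of the mass of the sum and is the sole reason you end up chasing a lost constant.

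The paper's proof is considerably shorter and avoids induction entirely. It observes that the floor map $(x_1,\dots,x_n)\mapsto(\lfloor x_1\rfloor,\dots,\lfloor x_n\rfloor)$ sends every non-negative \emph{real} solution of the inequality to a non-negative integer solution, and the preimage of any integer point under this map has volume at most one. Hence the number $N$ of integer solutions is at least the volume of the real solution set, which is the simplex with vertices $\mathbf 0,\,s_1\mathbf e_1,\dots,s_n\mathbf e_n$ and has volume $\tfrac{1}{n!}\prod_i s_i=\prod_i(s_i/i)$. Your induction is effectively re-deriving this volume formula by Cavalieri slicing; the geometric argument packages the same content in one step.
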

\begin{proof}
    Let $N$ denote the number of non-negative integer solutions to the inequality.
    Observe that if $x_1,\dots,x_n$ is any non-negative (not-necessarily integral) solution to the inequality, then $\lfloor x_1\rfloor,\dots,\lfloor x_n\rfloor$ is a non-negative integer solution to the inequality.
    Therefore, if $X$ denotes the set of all non-negative solutions to the inequality, then $N$ is at least the volume of $X$.

    Finally, observe that $X$ is the convex hull of the points $\mathbf 0, s_1\mathbf e_1,\dots,s_n\mathbf e_n$ where $\mathbf e_i$ is the $i$th standard basis vector of $\R^n$.
    With this observation, it is a routine exercise to show that the volume of $X$ is precisely $\prod_{i=1}^n(s_i/i)$.
\end{proof}

\begin{theorem}\label{main}
    Fix a positive integer $\alpha$ and a sequence of positive integers $\alpha_1,\alpha_2,\ldots$ with $\alpha_i\leq\alpha$ for all $i$.
    For each $i$, set $h_i=p_i^{\alpha_i}$ where $p_i$ is the $i$th prime number.

    There is a universal constant $c$ such that if there exists a Hadamard matrix of order $4h_i$ for each $i$, then $\dfacets(n;r)\geq\exp\{cr^{1/(1+\alpha)}\}$ for all $n$ sufficiently large compared to $r$.
\end{theorem}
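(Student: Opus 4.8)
The plan is to construct, at scale $r$, a large family of rigid facets obtained as products of Hadamard simplices together with a few edges, and then to count the dimensions these facets realise using Proposition~\ref{number_solu}. First, for each $i$ the hypothesised Hadamard matrix of order $4h_i$ yields, through Proposition~\ref{simplex_iff_basis}, a Hadamard simplex $A_i\subseteq Q_{4h_i-1}$ with $\abs{A_i}=4h_i$, and Theorem~\ref{hada_simplex} gives $A_i\in\rfacets(4h_i-1;2h_i)$; I will also use the edge $E\eqdef Q_1=\{+1,-1\}$, which satisfies $E\in\rfacets(1;1)$. For each finitely supported sequence $x=(x_1,x_2,\dots)$ of non-negative integers with $\sum_{i\ge1}2h_ix_i\le r$, I set $x_0\eqdef r-\sum_{i\ge1}2h_ix_i\ge0$ and $P_x\eqdef E^{x_0}\times\prod_{i\ge1}A_i^{x_i}$ (the $x_i$-fold product of $A_i$). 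Proposition~\ref{direct_sum} then gives $P_x\in\rfacets(N(x);r)$ with $N(x)=x_0+\sum_{i\ge1}(4h_i-1)x_i=r+\sum_{i\ge1}(2h_i-1)x_i\le 2r$, and $P_x$ spans a simplex of dimension $\abs{P_x}-1=2^{x_0}\prod_{i\ge1}(4h_i)^{x_i}-1$. Finally Corollary~\ref{rigid_extends}, applied by padding $P_x$ with a single point, shows that $\dim P_x$ is the dimension of a rigid facet of $\VR(Q_n;r)$ for every $n\ge N(x)$; hence, once $n\ge 2r$, every number $\dim P_x$ occurs as a dimension in $\rfacets(n;r)$.

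Next I would verify that distinct admissible $x$ produce distinct $\dim P_x$, so that $\dfacets(n;r)\ge\#\{x:\sum_{i\ge1}2h_ix_i\le r\}$ whenever $n\ge 2r$. Because $p_1=2$ we have $4h_1=2^{2+\alpha_1}$, while $4h_i=2^2p_i^{\alpha_i}$ with $p_i$ odd for $i\ge2$, so
\[
  1+\dim P_x=2^{\,x_0+(2+\alpha_1)x_1+2\sum_{i\ge2}x_i}\ \prod_{i\ge2}p_i^{\alpha_ix_i}.
\]
Unique factorisation recovers $\alpha_ix_i$, hence $x_i$ (using $\alpha_i\ge1$), for every $i\ge2$; substituting $x_0=r-\sum_{i\ge1}2h_ix_i$ into the exponent of $2$ leaves a linear function of $x_1$ whose coefficient $(2+\alpha_1)-2h_1=2+\alpha_1-2^{1+\alpha_1}$ is nonzero (as $2^{1+\alpha_1}>2+\alpha_1$ for $\alpha_1\ge1$), so $x_1$ and then $x_0$ are determined as well. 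This reduces the theorem to a purely arithmetic estimate on the number of non-negative integer solutions of $\sum_{i\ge1}2h_ix_i\le r$.

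For that estimate I would fix a truncation length $t$, restrict to $x_i=0$ for $i>t$, and count solutions of $\sum_{i=1}^t h_ix_i\le R$ with $R\eqdef\lfloor r/2\rfloor$. Proposition~\ref{number_solu} applied with $s_i=R/h_i$ gives at least $\prod_{i=1}^t\frac{R}{ih_i}=\frac{R^t}{t!\,\prod_{i=1}^t h_i}$ solutions. Since $\prod_{i=1}^t h_i=\prod_{i=1}^t p_i^{\alpha_i}\le\bigl(\prod_{i=1}^t p_i\bigr)^{\alpha}=\exp\bigl(\alpha\,\theta(p_t)\bigr)$ with $\theta(p_t)\le(1+o(1))p_t\le(1+o(1))t\ln t$, and $t!\le t^t$, this count is at least $\exp\bigl(t\ln R-(1+\alpha)(1+o(1))t\ln t\bigr)$. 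Choosing $t$ of order $r^{1/(1+\alpha)}$ (up to a power of $\ln r$) makes the exponent a positive constant multiple of $r^{1/(1+\alpha)}$, which gives $\dfacets(n;r)\ge\exp\{cr^{1/(1+\alpha)}\}$ for a universal $c>0$ and all sufficiently large $r$ (hence all $n$ large compared to $r$).

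I expect the arithmetic optimisation in the last paragraph to be the main obstacle: one must balance the truncation length $t$ against the factor $t!\,\prod_{i\le t}p_i^{\alpha_i}$, which requires reasonably sharp estimates for $p_t$ and for the primorial $\prod_{i\le t}p_i$ (equivalently, for the Chebyshev function $\theta(p_t)$); overly crude bounds cost a constant factor in the coefficient of $\ln t$ and would weaken the exponent $1/(1+\alpha)$. By comparison, the geometric input — that each $P_x$ is a rigid facet and that padding preserves this — is immediate from Propositions~\ref{direct_sum} and~\ref{simplex_iff_basis}, Theorem~\ref{hada_simplex}, and Corollary~\ref{rigid_extends}, and the injectivity of $x\mapsto\dim P_x$ is just unique factorisation.
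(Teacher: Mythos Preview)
Your proof follows essentially the same route as the paper: build rigid facets as products $Q_{r_0}\times\prod_i A_i^{x_i}$ padded by a single point, use unique factorisation to see that $x\mapsto\dim P_x$ is injective, and then count admissible $x$ via Proposition~\ref{number_solu} together with the primorial and Stirling estimates. Two small remarks: your injectivity argument is in fact more careful than the paper's (which glosses over the collision between $p_1=2$ and the power of $2$ coming from $Q_{r_0}$), while in the final estimate the paper uses the sharper Stirling form $k!=(k/e)^{(1+o(1))k}$ --- your crude bound $t!\le t^t$ loses the crucial factor $e$ and is what forces your ``up to a power of $\ln r$'' hedge; with Stirling one may take $t=\lfloor r^{1/(1+\alpha)}\rfloor$ directly and obtain the stated exponent without any logarithmic loss.
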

\begin{proof}
    Let $A_i$ be a Hadamard simplex of size $4h_i$  in $Q_{4h_i-1}$. So the diameter of the set $A_i$ is $2h_i$.
    Fix a value of $k$ to be determined later and consider any non-negative integers $x_1,\dots,x_k,r$ satisfying the inequality $\sum_{i=1}^n 2h_ix_i\leq r$. Set $r_0 =r-\sum_{i=1}^k 2h_ix_i$ and $n_0 =  r_0 +\sum_{i=1}^k x_i\cdot (4h_i-1)$. Fix $n>n_0$.

    By combining Theorem~\ref{hada_simplex} and Proposition~\ref{direct_sum}, one finds that
    \[
        A(x_1,\dots,x_k)\eqdef Q_{r_0}\times\prod_{i=1}^k A_i^{x_i}\times\{\textbf 1_{n-n_0}\}
    \]
    is a rigid facet of $\VR(Q_n;r)$ and has size
    \[
        s(x_1,\ldots, x_k)\eqdef\abs{A(x_1,\dots,x_k)}= 2^{r_0}\prod_{i=1}^k(4h_i)^{x_i}=2^{r_0+\sum_{i=1}^k 2x_i}\prod_{i=1}^k p_i^{\alpha_ix_i}.
    \]
    Since $p_1,\dots,p_k$ are distinct primes, $s(x_1,\ldots, x_k)$ is uniquely determined by the values of $x_1,\dots,x_k$ and so each of these rigid facets live in different dimensions.
    Therefore, $\dfacets(n;r)$ is at least the number of non-negative integer solutions to $\sum_{i=1}^k 2h_ix_i\leq r$.
    By Proposition~\ref{number_solu}, this number is at least $\prod_{i=1}^k{r\over 2ih_i}$.
    Therefore, we will have established the claim if we can show that $\prod_{i=1}^k{r\over 2ih_i}\geq\exp\{cr^{1/(1+\alpha)}\}$ for some appropriately chosen $k=k(r)$ and $c$.

    To do this, we need two asymptotic approximations:
    \[
        k!=\biggl({k\over e}\biggr)^{(1+o(1))k},\qquad\text{and}\qquad \prod_{i=1}^k p_i=k^{(1+o(1))k},
    \]
    where $o(1)\to 0$ as $k\to\infty$.
    The first approximation is due to Stirling and is well-known.
    The second approximation can be recovered from the asymptotic approximation of the Chebyshev function from {\cite[Page 27]{Hardy99}}, which states that $\sum_{i:\ p_i\leq x}\log p_i =\bigl(1+o(1)\bigr)x$, along with the prime number theorem, which states that $p_k=\bigl(1+o(1)\bigr)k\log k$.

    Combining these two approximations,
    \begin{align*}
        \prod_{i=1}^k{r\over 2ih_i}\geq\prod_{i=1}^k{r\over 2ip_i^\alpha}={r^k\over 2^k\cdot k!\cdot\bigl(\prod_{i=1}^kp_i\bigr)^\alpha}=\biggl({er\over 2k^{1+\alpha}}\biggr)^{(1+o(1))k},
    \end{align*}
    where $o(1)\to 0$ as $k\to\infty$.
    Selecting $k=\lfloor r^{1/(1+\alpha)}\rfloor$ then yields the claim.
\end{proof}

If Hadamard's conjecture were to hold, then we could take $\alpha=1$ in Theorem~\ref{main} and thus obtain $\dfacets(n;r)\geq\exp\{cr^{1/2}\}$.
Of course, the truth of Hadamard's conjecture is far from settled and so we can get only a weaker bound:
\begin{cor}
    There is a universal constant $c$ for which $\dfacets(n;r)\geq\exp\{cr^{1/5}\}$ for all $n$ sufficiently large compared to $r$.
\end{cor}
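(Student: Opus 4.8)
The plan is simply to feed the right data into Theorem~\ref{main}. We will apply that theorem with $\alpha = 4$, which makes the exponent $1/(1+\alpha)$ equal to $1/5$; the only thing to check is that the hypothesis of Theorem~\ref{main} holds unconditionally for this value of $\alpha$. That hypothesis asks for a sequence of positive integers $\alpha_1,\alpha_2,\dots$, each at most $\alpha=4$, such that a Hadamard matrix of order $4p_i^{\alpha_i}$ exists for every $i$ (where $p_i$ is the $i$th prime). I would take $\alpha_i = 4$ for every $i$, so the task reduces to a single claim: for each prime $p$, there is a Hadamard matrix of order $4p^4$.

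I would verify this in two cases. If $p=2$, then $4p^4 = 2^6$, and the Sylvester (Kronecker power) construction $H_2^{\otimes 6}$ supplies a Hadamard matrix of that order. If $p$ is odd, set $q = p^2$; then $q$ is a prime power, and since the square of an odd number is $\equiv 1 \pmod 4$, we have $q \equiv 1 \pmod 4$. Now invoke the classical fact that a Hadamard matrix of order $4q^2$ exists for every prime power $q$ with $q\equiv 1\pmod 4$ (one of the standard unconditional infinite families of Hadamard orders, arising from the Paley-type conference/difference-set constructions; see the usual references on Hadamard matrices). Applied to $q = p^2$, this produces a Hadamard matrix of order $4q^2 = 4p^4$, as needed. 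With all $\alpha_i = 4 \le 4 = \alpha$, Theorem~\ref{main} then yields a universal constant $c$ for which $\dfacets(n;r)\ge\exp\{cr^{1/5}\}$ for all $n$ sufficiently large compared to $r$.

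I do not anticipate a genuine obstacle: every analytic estimate is already packaged inside Theorem~\ref{main}, and what remains is bookkeeping together with the citation of a known Hadamard family. The one point requiring a little care is the source of the exponent $4$ (as opposed to $2$): for primes $p \equiv 1 \pmod 4$ one could already take $\alpha_i = 2$ directly, but for $p\equiv 3\pmod 4$ the cheapest power of $p$ that is $\equiv 1\pmod 4$ is $p^2$, which is what forces $\alpha_i=4$ and hence the exponent $1/5$. As already remarked, any strengthening of the input --- the full Hadamard conjecture, or merely the existence of order-$4q^2$ Hadamard matrices for \emph{all} prime powers $q$ --- would let one rerun the same argument with a smaller $\alpha$ and sharpen the exponent to $1/2$ or $1/3$ respectively.
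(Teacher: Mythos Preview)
Your strategy is exactly the paper's: apply Theorem~\ref{main} with $\alpha=4$ and $\alpha_i=4$ for every $i$, so that the exponent becomes $1/5$ and the only outstanding task is to exhibit, for every prime $p$, a Hadamard matrix of order $4p^4$. The case $p=2$ via Sylvester is fine.

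The point that needs care is your justification for odd $p$. You set $q=p^2$ and appeal to a ``classical fact that a Hadamard matrix of order $4q^2$ exists for every prime power $q$ with $q\equiv 1\pmod 4$,'' attributing this to ``Paley-type conference/difference-set constructions.'' I do not believe that statement is a classical result in the form you claim: the Paley constructions give orders $q+1$ or $2(q+1)$, not $4q^2$, and the known pre-2006 constructions of regular Hadamard matrices of order $4n^2$ (Menon--Hadamard difference sets, Xia's construction for $n=p^2$ with $p\equiv 3\pmod 4$, etc.) do not cover all prime powers $n\equiv 1\pmod 4$. The clean reference for exactly what you need --- Hadamard matrices of order $4m^4$ for every odd $m$, hence in particular for $m=p$ an odd prime --- is the 2006 paper of Muzychuk and Xiang on symmetric Bush-type Hadamard matrices, and that is precisely what the paper cites. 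So your outline is correct, but the supporting citation should be replaced by Muzychuk--Xiang rather than a vague appeal to Paley.
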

\begin{proof}
    Muzychuk and Xiang in \cite{MX06} constructed Hadamard matrices of order $4m^4$ for all odd integers $m$.
    We can therefore take $\alpha =4$ in Theorem~\ref{main}.
\end{proof}

\section{Concluding Remarks and Open Questions} \label{sec:KG_questions}

In the general, the following question remains widely open.

\begin{ques} Determine the formulas for the number of non-trivial homologies in the complexes $\VR(Q_n; r)$ and $\VR(\mathcal{L}^{n}_\ell; r)$.

Is it true that $\tilde{H}_{q}(\VR(Q_n; r))\neq 0$ iff $q\in \{2r-1, 2^r-1\}$ for an $r$ such that the Hadamard matrix of order $2r$ exists? \end{ques}

We proved that if the Hadamard matrix of order $2r$ exists then the $(2r-1)$th homology in $\VR(Q_n; r)$ is non-trivial for $n\geq 2r-1$. We do not know whether the converse holds. This can be considered to be a topological version of Hadamard's conjecture.

\begin{ques} Suppose that $\tilde{H}_{2r-1}(\VR(Q_n; r))\neq 0$ for all $n\geq 2r-1$. Is it true that the Hadamard matrix of $2r$ exists? \end{ques}
The authors in \cite{BG24} obtains a lower bound for the connectivity of $\VR{Q_n;r}$. It is worth to ask the following question about connectivity.
\begin{ques} Let $r$ be such that the Hadamard matrix of $2r$ exists. Is it true that the connectivity of  $\VR(Q_n; r)$ is $2r-2$ for $n\geq 2r-1$?  \end{ques}

Beyond the topological questions, we still do not understand most combinatorial properties of $\VR(Q_n;r)$.
The most interesting questions in this direction regard the sizes of the facets.

Let $b(n;r)$ denote the size of the largest facet in $\VR(Q_n;r)$; equivalently, $b(n;r)$ is the size of the largest diameter-$r$ subset of $Q_n$.
The value of $b(n;r)$ is actually well-known at this point and was originally established by Kleitman~\cite{Kleitman}, who showed that
\[
    b(n;r)=\begin{cases}
        \sum_{i=1}^{r/2}{n\choose i}, & \text{if $r$ is even},\\
        2\sum_{i=1}^{(r-1)/2}{n-1\choose i}, & \text{if $r$ is odd}.
    \end{cases}
\]
In the case when $r$ is even, the extremal examples are Hamming balls of radius $r/2$, and in the case when $r$ is odd, the extremal examples are the product of $Q_1$ with a Hamming ball of radius $(r-1)/2$ in $Q_{n-1}$.
Notice, however, that neither of these examples are rigid and so their maximality is tied to the ambient dimension of the host cube.
It is therefore interesting to define $b'(n;r)$ to be the size of the largest maximally diameter-$r$, rigid subset of $Q_n$ (which can exist only when $n\geq r$).
Here, an obvious candidate for an extremal set is the sub-cube $Q_r\times\{\mathbf 1_{n-r}\}$, which has size $2^r$.
\begin{conj}
    $b'(n;r)=2^r$ whenever $n\geq r$.
\end{conj}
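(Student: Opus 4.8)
The plan is to dispatch the lower bound directly and to reduce the upper bound to a combinatorial injectivity statement, which is where the real difficulty lies.

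\emph{Lower bound.} Since $\diam(Q_r)=r$, the entire vertex set $Q_r$ is the unique facet of $\VR(Q_r;r)$, and it is rigid because the antipode of each vertex lies in $Q_r$ at distance $r$; hence $Q_r\in\rfacets(r;r)$. Consequently $Q_r\times\{\mathbf 1_{n-r}\}$ is a rigid facet of $\VR(Q_n;r)$ of size $2^r$ -- trivially when $n=r$, and by Corollary~\ref{rigid_extends} when $n>r$ -- so $b'(n;r)\ge 2^r$.

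\emph{Reduction for the upper bound.} Let $A\in\rfacets(n;r)$; the goal is $|A|\le 2^r$. By rigidity there are $a,a'\in A$ with $d(a,a')=r$, and after a sign change of coordinates followed by a permutation of coordinates (an isometry of $Q_n$) we may assume $a=\mathbf 1_n$ and $a'=(-\mathbf 1_r,\mathbf 1_{n-r})$. For $b\in A$ write $b=(b_1,b_2)\in Q_r\times Q_{n-r}$ and put $\alpha(b)=d(b_1,\mathbf 1_r)$, $\beta(b)=d(b_2,\mathbf 1_{n-r})$. From $d(b,a)=\alpha(b)+\beta(b)\le r$ and $d(b,a')=\bigl(r-\alpha(b)\bigr)+\beta(b)\le r$ we get
\[
    \beta(b)\le\min\bigl\{\alpha(b),\ r-\alpha(b)\bigr\}\le\lfloor r/2\rfloor ,
\]
and in particular $b_2=\mathbf 1_{n-r}$ whenever $b_1\in\{\mathbf 1_r,-\mathbf 1_r\}$. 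The conjecture would follow at once if the projection $\pi\colon A\to Q_r$, $b\mapsto b_1$, were injective. One can also hope for the stronger statement that $A$ is contained in an $r$-dimensional affine subspace of $\mathbb F_2^n$ (under the natural identification of $Q_n$ with $\mathbb F_2^n$), which likewise forces $|A|\le 2^r$; this is consistent with the ``non-subcube'' facets one encounters, such as $E\times\{\mathbf 1_{n-3}\}$ where $E\subseteq Q_3$ is the set of vectors having an even number of $-1$'s -- a rigid facet of $\VR(Q_n;2)$ of size $2^2$ that is a two-dimensional affine subspace yet lies in no two-dimensional subcube.

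\emph{The main obstacle.} Everything reduces to forbidding a collision: distinct $b,c\in A$ with $b_1=c_1$. The bound on $\beta$ above depends only on $\alpha$, hence is identical for $b$ and $c$ and cannot preclude this by itself, so rigidity \emph{and} maximality of $A$ must both be exploited -- and these two hypotheses interact delicately. For instance, for even $r$ the Hamming sphere $\{x\in Q_n:d(x,\mathbf 1_n)=r/2\}$ is rigid of diameter $r$ but is not maximal, while its maximal completion -- the Hamming ball of radius $r/2$ -- is not rigid; so one cannot argue by enlarging $A$. The plan is instead to work entirely within the given facet: feed a collision $\{b,c\}$ into the maximality of $A$ to force additional points into $A$ (for instance $(b_1,\mathbf 1_{n-r})$), and then combine the rigidity of these new points with the $\beta$-bound to exhibit two elements of $A$ at distance exceeding $r$, a contradiction. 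Getting such a cascade to terminate for every $r$ is the crux of the matter, and is precisely why the statement is only conjectured; the cases $r\le 2$ can be checked by hand.
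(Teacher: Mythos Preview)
This statement is a \emph{conjecture} in the paper, not a theorem; the paper offers no proof whatsoever. It merely records the sub-cube $Q_r\times\{\mathbf 1_{n-r}\}$ as ``an obvious candidate for an extremal set'' of size $2^r$ and then states the conjecture. So there is nothing to compare your attempt against.

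That said, your write-up is internally honest about this. Your lower bound argument is correct and is exactly the example the paper points to. Your upper-bound discussion is not a proof and you acknowledge as much: the reduction to injectivity of the projection $\pi\colon A\to Q_r$ is a natural first move, but the ``cascade'' you propose (using maximality to force $(b_1,\mathbf 1_{n-r})\in A$ from a collision, then deriving a contradiction via rigidity) is only a sketch of a hoped-for mechanism, not an argument that actually terminates. In particular, maximality of $A$ does \emph{not} obviously force $(b_1,\mathbf 1_{n-r})\in A$ from a collision $b_1=c_1$; maximality only says that any point outside $A$ is at distance $>r$ from \emph{some} element of $A$, which gives you no leverage on points you want to be \emph{inside} $A$. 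So even the first step of the cascade is unjustified. Your closing sentence (``is precisely why the statement is only conjectured'') is the accurate summary: the upper bound remains open, both in the paper and in your proposal.
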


Next, let $s(n;r)$ denote the size of the smallest facet in $\VR(Q_n;r)$.
We showed in Theorem~\ref{hada_simplex} that $s(n;r)\leq 2r$ whenever a Hadamard matrix of order $2r$ exists and $n\geq 2r-1$.
Unfortunately, we have not had any success in proving a matching lower bound.
It is tempting to conjecture that $s(n;r)=2r$ whenever a Hadamard matrix of order $2r$ exists and $n$ is large enough compared to $r$, but we would be satisfied with the following conjecture:
\begin{conj}
    There is a universal constant $c$ such that $s(n;r)\geq cr$ for all $n$ sufficiently large compared to $r$.
\end{conj}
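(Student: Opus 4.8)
A natural line of attack is to recast the problem in extremal set theory and then analyze the structure forced on a small facet. Suppose $A\subseteq Q_n$ is a facet of $\VR(Q_n;r)$. Applying the coordinate-flipping automorphism of $Q_n$ carrying one point of $A$ to the all-ones vector $\mathbf 1$, we may assume $\mathbf 1\in A$; writing $T_a\eqdef\{i\in[n]:a_i=-1\}$ for $a\in A$ then identifies $A$ with a family $\mathcal F\ni\emptyset$ of subsets of $[n]$, each of size at most $r$ and with pairwise symmetric differences at most $r$, and the maximality of $A$ becomes the identity $\bigcap_{T\in\mathcal F}B(T,r)=\mathcal F$, where $B(T,r)$ denotes the collection of subsets of $[n]$ at Hamming distance at most $r$ from $T$. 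The conjecture is then that $\abs{\mathcal F}\geq cr$. One may moreover assume $A$ is rigid: if $\abs{\mathcal F}<cr$ then $\abs{\bigcup\mathcal F}<cr^2$, so once $n$ exceeds that threshold there is a coordinate outside $\bigcup\mathcal F$, and flipping it in any point of $A$ lying within distance $r-1$ of all of $A$ would produce a new point within distance $r$ of all of $A$, contradicting maximality; by Corollary~\ref{rigid_extends} the rigid facets are independent of $n$, so it is enough to bound their size.

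The next step is to extract structure by testing cheap points. Since each singleton $\{c\}$ with $c\notin\bigcup\mathcal F$ must be excluded and $\abs{\{c\}\triangle T}=\abs T+1$, the family $\mathcal F$ must contain a set $T^*$ of size exactly $r$; using in addition that two disjoint $r$-sets lie at Hamming distance $2r>r$, one finds that $\mathcal F$ must contain several sets of size $r$, so $\abs{\mathcal F}$ already exceeds a small absolute constant. Probing further --- for instance with the points $([r]\setminus Z)\cup C_Z$, where $T^*=[r]$, $Z\subseteq[r]$ is small, and $C_Z$ is a $\abs Z$-element set of coordinates outside $\bigcup\mathcal F$ --- shows that each such point must be excluded by a member of $\mathcal F$ meeting $Z$ and intersecting $T^*$ in at most about $r/2$ coordinates, so the traces of $\mathcal F$ on $T^*$ must form a covering design of $[r]$. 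We have not managed to push such arguments past constants, however, because every obstruction inequality here pits a quantity of size at most $r$ against one of size at least $r/2$, so a single member of $\mathcal F$ can always absorb a constant fraction of any family of obligations.

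The hard part is getting past constants to $\Omega(r)$, which seems to call for a genuinely global handle on small ball-intersection-closed sets. I see two candidate strategies. The first is a recursion in $r$: having fixed $T^*=[r]$, the restrictions to $[r]$ of the remaining members of $\mathcal F$ should themselves form a maximal configuration against a family of Hamming balls of radii between $r/2$ and $r$ inside the $r$-dimensional subcube $2^{[r]}$, suggesting an induction whose base case is the trivial observation that $\VR(Q_r;r)$ has $Q_r$ as its only facet; the crux is relating $\abs{\mathcal F}$ to the size of that restricted configuration while controlling the part of $\mathcal F$ lying outside $[r]$. The second is a fractional relaxation: place weights on a carefully chosen set of test points so that the total weight any single member of $\mathcal F$ can exclude is an $O(1/r)$ fraction of the whole, which would force $\abs{\mathcal F}=\Omega(r)$ at once; this would rest on an exact description of which test points a set of given size and given intersection with $T^*$ can exclude. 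We note that even the much weaker assertion $s(n;r)\to\infty$ as $r\to\infty$ does not appear to follow from anything short of such a systematic analysis.
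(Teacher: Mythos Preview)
The statement you are attempting to prove is stated in the paper as an open \emph{conjecture}, not a theorem; the authors explicitly write that they ``have not had any success in proving a matching lower bound'' and offer no proof. There is therefore nothing in the paper to compare your argument against.

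Your proposal, moreover, is not a proof either, and you say so yourself: after the reductions to rigid families and the probing with singleton and near-$T^*$ test points, you concede that ``we have not managed to push such arguments past constants'' and then sketch two strategies (a recursion in $r$ and a fractional-relaxation/weighting scheme) without carrying either to a conclusion. The reductions in your first paragraph are sound and the observations in the second paragraph are correct as far as they go, but the crux of the conjecture---getting from $\abs{\mathcal F}\geq C$ for some absolute constant $C$ to $\abs{\mathcal F}\geq cr$---is precisely the step you leave open. In short, the paper has no proof and your write-up is a research outline rather than a proof; if you can make either of your two proposed strategies work, that would resolve an open problem.
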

One could, of course, ask also about $s'(n;r)$, the size of the smallest rigid facet in $\VR(Q_n;r)$, but we believe that $s'(n;r)=s(n;r)$ whenever $n$ is sufficiently large compared to $r$.

Finally, recall from Section~\ref{sec:facets} that $\dfacets(n;r)$ denotes the number of dimensions in which $\VR(Q_n;r)$ has a rigid facet.
If $b'(n;r)=2^r$ as we believe, then trivially $f(n;r)\leq 2^r$.
We gave a construction in Theorem~\ref{main} that implies that $\dfacets(n;r)\geq\exp\{cr^{1/5}\}$ whenever $n$ is sufficiently large.
While this bound is super-polynomial, it is still sub-exponential.
\begin{ques}
    Is $\dfacets(n;r)$ exponential in $r$ when $n$ is sufficiently large?
\end{ques}

One final question about the facets of $\VR(Q_n;r)$ relates directly to our construction from Hadamard matrices.
By combining Theorem~\ref{hada_simplex} and Corollary~\ref{rigid_extends}, if $H$ is a normalized Hadamard matrix of order $n$, then the columns of $H$ form a diametrically-maximal subset of $Q_n$.
Beyond this, as shown in Theorem~\ref{hada_simplex}, if $H_{-1}$ is the matrix formed by deleting the first row of $H$, then the columns of $H_{-1}$ are a diametrically-maximal subset of $Q_{n-1}$.
Our experiments seem to suggest this phenomenon continues when deleting more rows from $H$.
For a non-negative integer $r$ and a matrix $A$, let $A_{-r}$ denote the submatrix formed by deleting the first $r$ rows from $A$.
\begin{conj}
    For any non-negative integer $r$, there is a number $n_0$ such that whenever $H$ is a normalized Hadamard matrix of order $n\geq n_0$, then the columns of $H_{-r}$ form a diametrically-maximal subset of $Q_{n-r}$.
\end{conj}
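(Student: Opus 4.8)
While we are unable to prove this conjecture, we sketch the approach we believe is most promising, which follows the strategy behind Theorem~\ref{hada_simplex} and makes clear where a new idea is needed. Fix a normalized Hadamard matrix $H$ of order $n$, write $c_1,\dots,c_n\in Q_n$ for its columns (so $c_1=\mathbf 1$), and let $c_j'\in Q_{n-r}$ be obtained from $c_j$ by deleting its first $r$ coordinates; put $A'=\{c_1',\dots,c_n'\}$. Decomposing $HH^T=nI$ along the first $r$ rows, and using that the first row of $H$ is $\mathbf 1$ (so that $H\mathbf 1=n\mathbf e_1$), one obtains the analogues of the two identities of Proposition~\ref{hada_obs}:
\[
    \sum_{x\in A'}x=\mathbf 0,\qquad \sum_{x\in A'}xx^T=nI_{n-r}.
\]
If $b_j\in\{\pm1\}^r$ denotes the vector of deleted coordinates of $c_j$, then orthogonality of $c_i$ and $c_j$ gives $\langle c_i',c_j'\rangle=-\langle b_i,b_j\rangle\in\{-r,-r+2,\dots,r\}$ for $i\ne j$, so the columns of $A'$ are pairwise near-orthogonal. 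Taking $n_0>2^r$, a pigeonhole argument forces two columns of $H$ to agree on their first $r$ coordinates, so $A'$ has diameter exactly $n/2$ and the $c_j'$ are distinct, i.e.\ $\abs{A'}=n$.

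Now suppose, for contradiction, that some $y\in Q_{n-r}\setminus A'$ satisfies $d(y,c_j')\le n/2$ for all $j$; equivalently, writing $a_j=\langle y,c_j'\rangle$, that $a_j\ge -r$ for all $j$, and, since $y$ is not a column, also $a_j\le n-r-2$ for all $j$. The identities above yield $\sum_j a_j=0$ and $\sum_j a_j^2=y^T(nI)y=n(n-r)$. Set $E=\{j:a_j<0\}$. For $j\notin E$ we have $a_j\ge0$, so $\sum_{j\notin E}a_j^2\le\bigl(\sum_{j\notin E}a_j\bigr)^2=\bigl(\sum_{j\in E}\abs{a_j}\bigr)^2\le r^2\abs E^2$, while $\sum_{j\in E}a_j^2\le r\sum_{j\in E}\abs{a_j}\le r^2\abs E$; hence $n(n-r)\le r^2\bigl(\abs E^2+\abs E\bigr)$ and so $\abs E\ge\tfrac1r\sqrt{n(n-r)}-O(1)$. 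When $r=1$ this gives $\abs E\ge n-1=\abs{A'}-1$, so at most one index lies outside $E$: if none does then $\sum_j a_j=-\abs E\ne0$, and if $j_0$ is the unique one then $\sum_j a_j=0$ forces $a_{j_0}=n-1=n-r$, hence $y=c_{j_0}'\in A'$ --- the contradiction recovering Theorem~\ref{hada_simplex}. For $r\ge2$, however, the estimate only gives $\abs E\gtrsim n/r$: the hypothetical point $y$ is merely forced to lie beyond distance $(n-r)/2$ from a positive proportion of the columns, which falls well short of a contradiction.

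Bridging this gap is the crux, and it is delicate because the natural source of extra leverage is unavailable: the triple products $\sum_j(c_j')_a(c_j')_b(c_j')_c$ are \emph{not} determined by the Hadamard condition --- for Sylvester matrices they already attain the value $n$ --- so any valid argument must use only the first two moments together with integrality and the hypothesis that $n$ is large. Two routes seem worth pursuing. The first is a finer analysis of the relation $ny=\sum_j(a_j+r)c_j'$, which exhibits $y/r$ as a convex combination, supported off $E$, of the near-orthogonal columns: one would refine $A'$ into its equilateral sub-simplices $A'_b=\{c_j':b_j=b\}$ (each of pairwise distance $n/2$, with cross inner products controlled by $\langle b,b'\rangle$), run the moment estimate within and between these blocks, and hope that the membership $y\in\{\pm1\}^{n-r}$ is then incompatible with $y\notin A'$. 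The second is an induction on $r$: granting the conclusion for $r-1$, one deletes a single further coordinate from the now non-Hadamard --- but still highly structured --- diametrically-maximal set $A^{(r-1)}$, where the chief difficulty is the familiar one that a blocking column for a lift $(\pm1,y)$ may carry the ``wrong'' value in the deleted coordinate, and so witness distance only $n/2$ rather than more; here one would need the equilateral refinement of $A^{(r-1)}$ to supply, once $n$ is large, a second blocking column of the correct type. In either approach the heart of the matter is to manufacture, from the largeness of $n$ alone, the rigidity that in the $r=1$ case is handed to us for free by the exact identity $\sum_{x\in A}xx^T=(n+1)I$ of a genuine Hadamard simplex.
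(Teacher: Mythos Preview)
The statement you are treating is a \emph{conjecture} in the paper, appearing in the concluding section of open problems; the paper offers no proof, and indeed presents it as unresolved. So there is no ``paper's proof'' to compare against, and your opening disclaimer that you are unable to prove it is entirely appropriate.

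That said, the partial analysis you give is sound. The two moment identities $\sum_j c_j'=\mathbf 0$ and $\sum_j c_j'(c_j')^T=nI_{n-r}$ follow exactly as you say from the block structure of $HH^T=nI$ together with the normalization of the first row; the pigeonhole step for $n>2^r$ correctly pins the diameter at $n/2$ and guarantees $\abs{A'}=n$; and your moment computation for a hypothetical blocking point $y$ correctly yields $n(n-r)\le r^2(\abs E^2+\abs E)$. Your specialization to $r=1$ is precisely the argument of Theorem~\ref{hada_simplex} in different coordinates (there the simplex sits in $Q_{n-1}$ with $n$ points and scale $n/2$), and your diagnosis that for $r\ge 2$ the bound degrades to $\abs E\gtrsim n/r$ and no contradiction ensues is correct.

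One small inaccuracy: in the relation $ny=\sum_j(a_j+r)c_j'$ the weights $a_j+r$ are non-negative for all $j$, not just for $j\notin E$, so the convex combination is not literally ``supported off $E$'' --- the mass is merely concentrated there. This does not affect your overall point. Your remark that higher moments $\sum_j (c_j')_a(c_j')_b(c_j')_c$ are not determined by the Hadamard condition is also correct and is indeed the essential obstruction to pushing the second-moment argument further.
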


\textbf{Acknowledgments}

We would like to thank Henry Adams for his helpful suggestions. This work was completed in part with resources provided by the Auburn University Easley Cluster.

\end{document}